\documentclass[11pt]{article}
\usepackage[margin=1in]{geometry}
\usepackage{algorithm}
\usepackage{algpseudocode}
\usepackage{url}

\providecommand{\headers}[2]{}

\let\oldtitle\title
\renewcommand{\title}[2][]{\oldtitle{#2}}

\makeatletter
\newcommand{\LoadPackageUnlessAcm}[2][]{%
    \@ifclassloaded{acmart}{%
        \PackageInfo{preamble}{Skipping package `#2' because acmart class is used}%
    }{%
        \if\relax\detokenize{#1}\relax
            \usepackage{#2}%
        \else
            \usepackage[#1]{#2}%
        \fi
    }%
}
\makeatother

\LoadPackageUnlessAcm{caption}
\LoadPackageUnlessAcm{subcaption}
\LoadPackageUnlessAcm{authblk}
\LoadPackageUnlessAcm{hyperref}

\usepackage{amstext}
\usepackage{stmaryrd}
\usepackage{algpseudocode}

\usepackage{units}
\usepackage{cancel}
\usepackage{graphicx}

\usepackage{xcolor}
\usepackage{array}
\usepackage{verbatim}
\usepackage{booktabs}
\usepackage{makecell}
\usepackage{calc}
\usepackage{textcomp}
\usepackage{multirow}
\usepackage{tikz,tikzscale}

\usepackage{pgfplots}
\usepackage{pgfplotstable}
\usepackage{pgfkeys}
\pgfplotsset{compat=1.18}

\definecolor{deepgreen}{RGB}{0,100,0}

\newif\ifrunscripts
\runscriptstrue

\pgfplotsset{
    p2/.style={teal, mark=*},
    p3/.style={orange, mark=triangle*},
    p7/.style={blue, mark=square*},
}

\newcommand{\plotwidth}{0.49\columnwidth}
\newcommand{\plotheight}{0.4\columnwidth}

\pgfplotscreateplotcyclelist{paper markers}{
    {teal,    solid, mark=*},
    {orange,   solid, mark=triangle*},
    {blue,     solid, mark=square*},
    {red,      solid, mark=pentagon*},
    {violet,     solid, mark=otimes*},
    {brown,    solid, mark=halfcircle*},
    {black,   solid, mark=diamond*},
}

\colorlet{BarColorOne}{orange!60!white}
\colorlet{BarColorTwo}{teal!80!white}
\colorlet{BarColorThree}{red!80!black}
\colorlet{BarColorFour}{blue!60!white}
\colorlet{BarColorFive}{brown!70!black}

\pgfplotsset{legend swatch/.style={area legend, draw=none}}

\pgfplotsset{
    paperplot/.style={
            width=\plotwidth,
            height=\plotheight,
            cycle list name=paper markers,
            every axis/.append style={font=\small},
            grid=major,
            thick
        }
}

\pgfplotsset{
    longplot/.style={
            width=0.8\columnwidth,
            height=\plotheight,
            cycle list name=paper markers,
            every axis/.append style={font=\small},
            grid=major,
            thick
        }
}

\usepackage[all]{xy}
\usepackage{mathtools}
\usepackage{placeins}
\usetikzlibrary{calc}
\usetikzlibrary{arrows.meta}
\usetikzlibrary{shadings}
\usepgfplotslibrary{fillbetween}

\usepackage[mode=multiuser,status=draft]{fixme}
\fxsetup{innerlayout=inline}
\fxsetup{targetlayout=colorcb}
\fxusetheme{color}
\FXRegisterAuthor{mw}{envmw}{MW}

\definecolor{apply}{rgb}{0.3,.7,0.}
\definecolor{applyface}{rgb}{.9,.95,0.}
\definecolor{invert}{rgb} {0.5,0.,1.}
\colorlet{invertface}{invert!60!red}

\usepackage[colorinlistoftodos,prependcaption,textsize=small]{todonotes}

\def\R{\mathbb R}

\usepackage{colortbl}

\usepackage{booktabs}
\usepackage[font=footnotesize,position=b]{subcaption}
\usepackage{adjustbox}

\makeatletter
\def\pgfplots@getautoplotspec into#1{%
        \begingroup
        \let#1=\pgfutil@empty
        \pgfkeysgetvalue{/pgfplots/cycle multi list/@dim}\pgfplots@cycle@dim
        \let\pgfplots@listindex=\pgfplots@numplots
        \pgfkeysgetvalue{/pgfplots/cycle list set}\pgfplots@listindex@set
        \ifx\pgfplots@listindex@set\pgfutil@empty
        \else
            \c@pgf@counta=\pgfplots@listindex
            \c@pgf@countb=\pgfplots@listindex@set
            \advance\c@pgf@countb by -\c@pgf@counta
            \globaldefs=1\relax
            \edef\setshift{%
                \noexpand\pgfkeys{
                    /pgfplots/cycle list shift=\the\c@pgf@countb,
                    /pgfplots/cycle list set=
                }
            }%
            \setshift%
            \globaldefs=0\relax
        \fi
        \pgfkeysgetvalue{/pgfplots/cycle list shift}\pgfplots@listindex@shift
        \ifx\pgfplots@listindex@shift\pgfutil@empty
        \else
            \c@pgf@counta=\pgfplots@listindex\relax
            \advance\c@pgf@counta by\pgfplots@listindex@shift\relax
            \ifnum\c@pgf@counta<0
                \c@pgf@counta=-\c@pgf@counta
            \fi
            \edef\pgfplots@listindex{\the\c@pgf@counta}%
        \fi
        \ifnum\pgfplots@cycle@dim>0
            \c@pgf@counta=\pgfplots@cycle@dim\relax
            \c@pgf@countb=\pgfplots@listindex\relax
            \advance\c@pgf@counta by-1
            \pgfplotsloop{%
                \ifnum\c@pgf@counta<0
                    \pgfplotsloopcontinuefalse
                \else
                    \pgfplotsloopcontinuetrue
                \fi
            }{%
                \pgfkeysgetvalue{/pgfplots/cycle multi list/@N\the\c@pgf@counta}\pgfplots@cycle@N
                \pgfplotsmathmodint{\c@pgf@countb}{\pgfplots@cycle@N}%
                \divide\c@pgf@countb by \pgfplots@cycle@N\relax
                \expandafter\pgfplots@getautoplotspec@
                \csname pgfp@cyclist@/pgfplots/cycle multi list/@list\the\c@pgf@counta @\endcsname
                {\pgfplots@cycle@N}%
                {\pgfmathresult}%
                \t@pgfplots@toka=\expandafter{#1,}%
                \t@pgfplots@tokb=\expandafter{\pgfplotsretval}%
                \edef#1{\the\t@pgfplots@toka\the\t@pgfplots@tokb}%
                \advance\c@pgf@counta by-1
            }%
        \else
            \pgfplotslistsize\autoplotspeclist\to\c@pgf@countd

            \pgfplots@getautoplotspec@{\autoplotspeclist}{\c@pgf@countd}{\pgfplots@listindex}%
            \let#1=\pgfplotsretval
        \fi
        \pgfmath@smuggleone#1%
        \endgroup
    }

\pgfplotsset{
    cycle list set/.initial=
}
\makeatother

\usepackage{lipsum}
\usepackage{amsfonts}
\usepackage{graphicx}
\usepackage{epstopdf}
\usepackage{amsmath,amssymb,amsthm}
\newtheorem{lemma}{Lemma}
\newtheorem{theorem}{Theorem}

\theoremstyle{definition}

\theoremstyle{remark}
\newtheorem{remark}{Remark}
\theoremstyle{plain}
\ifpdf
    \DeclareGraphicsExtensions{.eps,.pdf,.png,.jpg}
\else
    \DeclareGraphicsExtensions{.eps}
\fi

\headers{Matrix-Free Finite Elements in Cell-Wise Storage}{Wichrowski}

\title{Coalesced Matrix-Free Finite Elements in Cell-Wise Storage}
\author{Michał Wichrowski}

\usepackage{amsopn}

\providecommand{\keywordsname}{Keywords}
\newcommand{\Keywords}[1]{\par\noindent\textbf{\keywordsname:} #1}
\newcommand{\AMS}[1]{\par\noindent\textbf{AMS subject classifications:} #1}

\begin{document}
\author{Michał Wichrowski}
\footnotetext{Interdisziplinäres Zentrum für Wissenschaftliches Rechnen (IWR), Ruprecht-Karls-Universität Heidelberg, Germany, \texttt{mwichro@mimuw.edu.pl}}
\date{}

\maketitle

\begin{abstract}
    We present a GPU-oriented formulation of continuous high-order finite elements in which the redundant, cell-wise
(element-local) vector is the persistent primary representation of all field data, rather than a transient stage of
matrix-free operator evaluation. We prove that, given a preconditioner whose image is continuous, the entire flexible
conjugate gradient iteration can be carried out exactly on this unassembled representation: a simple primal-dual
pairing identity shows that all Krylov scalars computed from local data coincide with those of the assembled solve, so
inter-element communication is confined entirely to the preconditioner. The required direct stiffness summation (DSS)
is then realized without indirect gather-scatter, atomics, or coloring, by a dimensionally-split cascade of one-to-one
face exchanges that provably accumulates edge and vertex contributions as a byproduct of sequential axis passes;
unstructured macro-block interfaces and $h$-adaptive hanging nodes are handled by disjoint topological kernels and a
shadow-cell wrapper that leaves the high-throughput sweeps untouched. The cell-wise storage decouples the memory layout
from the mesh topology, and we exploit this freedom to benchmark blocked layouts that trade memory coalescing against
element contiguity. Numerical experiments on modern GPUs demonstrate that the resulting operator evaluation and solver
outperform state-of-the-art matrix-free implementations, signifficantly exceeding throughput of existing
implementations.

\end{abstract}

\Keywords{Direct stiffness summation, matrix-free, finite elements, multigrid, GPU}

\AMS{65Y10, 65Y20, 65N55, 65N30}

\section{Introduction}
\label{sec:intro}

Matrix-free evaluation of high-order finite element operators has become the method of choice for large-scale
simulation on modern hardware. Instead of assembling and storing a sparse matrix, the operator action is recomputed on
the fly from element-local data using sum factorization, reducing both memory footprint and memory traffic by an order
of magnitude at moderate polynomial degrees~\cite{Orszag1980,deville2002highorder,Kronbichler2012,bastian2019matrix}.
This shift is driven by the hardware itself: over the past two decades the floating-point throughput of processors has
grown far faster than their memory bandwidth, so that virtually all sparse and element-wise operations are limited by
data movement rather than arithmetic~\cite{williams2009roofline}. The trend is most pronounced on GPUs, which now
dominate the upper end of the TOP500 list and deliver their nominal bandwidth only under stringent conditions on the
access pattern.

GPUs add two further twists. First, their memory subsystem is throughput-oriented: peak bandwidth is attained only when
the threads of a warp access contiguous, aligned memory (coalescing), and any indirection-based access pattern forfeits
a substantial fraction of it. Second, an ever-growing share of the silicon is devoted to matrix units (tensor cores)
and to reduced-precision arithmetic, to the point that double-precision capability stagnates or is emulated through
lower-precision units~\cite{mukunoki2025dgemm}. An algorithm aiming to maximize hardware utilization should therefore
(i) arrange its data so that every load and store is coalesced, (ii) cast its inner kernels as small dense matrix
products, and (iii) tolerate reduced or mixed precision inside the preconditioner~\cite{goddeke2007performance}. These
three requirements shape the design presented in this paper.

State-of-the-art matrix-free frameworks: deal.II~\cite{Kronbichler2012,kronbichler2022cg,dealII97},
MFEM~\cite{mfem2021}, libCEED~\cite{libceed2021, ABDELFATTAH2021102841}, Nek5000/NekRS~\cite{nekrs2022,
    offermans2016strong}, and atmospheric models like NUMA~\cite{muller2019strong} organize the operator action around an
assembled global vector of unique degrees of freedom. Every operator application then begins by gathering element-local
copies through an index map and ends by scattering the element contributions back, with atomics or mesh coloring to
serialize concurrent writes. On a GPU this transition is precisely where the bandwidth is lost: the indirect,
warp-divergent accesses of the gather-scatter stage are the dominant cost of the otherwise highly tuned kernels, a fact
documented consistently across CEED benchmark studies~\cite{fischer2020scalability,kronbichler2022cg} and petascale
strong-scaling analyses~\cite{offermans2016strong, muller2019strong}. The arithmetic of sum factorization has been
polished for a decade; the remaining slack lies in how the data is stored and moved.

In this paper, we adopt an alternative perspective: we make the redundant, cell-wise (element-local) vector the
\emph{persistent} primary representation of all field data, and we reorganize the Krylov solver so that it operates on
this representation \emph{directly}. The assembled vector is never formed; continuity enters only through a direct
stiffness summation (DSS) operator applied inside the preconditioner. All storage becomes block-structured with
statically known offsets, so every kernel (operator application, vector updates, and DSS itself) reads and writes
coalesced, indirection-free memory.

The individual ingredients of our approach (element-local cell-wise storage, sum-factorized matrix-free kernels, and
direct stiffness summation) are well established. Element-local redundant vectors have been a staple of the
spectral-element community since the work of Deville, Fischer, and Mund~\cite{deville2002highorder}; the libCEED
library~\cite{libceed2021} formalizes the very decomposition we build on through its L-vector/E-vector/Q-vector
hierarchy, and Nek5000/NekRS~\cite{nekrs2022} store fields element-locally and invoke gather-scatter
(\texttt{gslib}~\cite{gslib, gslib2}) whenever continuity is required. The decisive difference is that these frameworks
treat the element-local vector as transient kernel input and output, while we make it the solver's persistent state:
the assembled representation is never formed at all, and all vector algebra (updates, duality pairings, and norms) is
performed directly on the redundant representation.

Our first contribution is the theoretical framework that justifies this design. The key duality-pairing identity,
stating that pairing an unassembled dual vector with a continuous primal vector reproduces the assembled inner product,
is mathematically elementary and has long been exploited in parallel spectral-element codes and in domain-decomposition
methods of FETI/BDDC type~\cite{farhat1991feti,dohrmann2003bddc}, where inner products between one assembled and one
unassembled vector are standard. What we have not found stated and proved in this form is the resulting equivalence
theorem: provided the preconditioner maps into the continuous subspace, the \emph{entire} flexible conjugate gradient
iteration~\cite{notay2000flexible} executed on unassembled cell-wise data is identical, iterate by iterate, tostandard
(Flexible) CG applied to the assembled global system. Assembly is thereby provably required exactly once per iteration,
inside the preconditioner, and nowhere else. While standard formulations of the DSS operator rely on mapping local
element-wise data to an assembled global vector~\cite[Ch.~7]{giraldo2020}, we bypass this requirement entirely by
operating strictly on the persistent E-vector.

We note that communication is not eliminated but \emph{confined}: the DSS sweep inside the smoother replaces all
indirect gather-scatter traffic of the classical formulation. We also emphasize that DSS is merely the realization of
this requirement adopted in this paper, not an ingredient of the theory: the equivalence theorem only demands that the
preconditioner consume the residual through its assembly and return a continuous (more generally,
constraint-satisfying) vector. Any preconditioner with this property qualifies, in particular overlapping patch
smoothers of additive or multiplicative Schwarz
type~\cite{pavarino1993additive,ArnoldFalkWinther00,fischer2000overlapping,WitteArndtKanschat21,brubeck2021scalable,cui2025implementation,wichrowski2025local},
whose patch-local solves produce continuous corrections by construction.

Our second contribution, and the algorithmic core of the paper, is a DSS operator that requires neither indirect
addressing nor atomic operations. Existing GPU gather-scatter implementations, such as \texttt{gslib} on GPUs and the
element restrictions of libCEED and MFEM~\cite{mfem2021}, traverse index lists, typically combined with atomics or mesh
coloring. We instead exploit the block-structured cell-wise layout, in the spirit of the statically addressed
macro-block storage of hierarchical hybrid grids~\cite{bergen2004hierarchical,gmeiner2015towards}: a
dimensionally-split cascade of three axis-wise passes, each consisting of one-to-one face exchanges with statically
known offsets, provably accumulates the four-way edge and eight-way vertex sums as a byproduct of the sequential
sweeps. Unstructured macro-block interfaces are handled by mutually disjoint vertex, line, and face kernels with
on-the-fly integer-arithmetic orientation canonicalization. A shadow-cell wrapper that extends the same machinery to
$h$-adaptive hanging nodes (reducing the non-conforming case exactly to the conforming one so that adaptivity never
touches the high-throughput sweeps) is described in Appendix~\ref{sec:dss_hanging}; it is not required by the solver,
since the companion multigrid paper~\cite{wichrowski2026MG} shows that DSS across hanging nodes can be avoided
altogether.

Our third contribution is an engineering study enabled by the storage paradigm itself. Because cell-wise storage
decouples the memory layout from the global mesh topology, the layout becomes a free tuning parameter rather than a
consequence of the assembly data structures. Blocked and interleaved element layouts have been explored before, for
instance via vectorization over cells in deal.II~\cite{Kronbichler2012,kronbichler2022cg} and struct-of-arrays element
batching in MFEM and libParanumal. However, the persistent cell-wise vector lets us benchmark the full design space,
including the blocked tiling that trades memory coalescing against element contiguity, under identical solver
conditions. We quantify the resulting bandwidth utilization and demonstrate, on the same A100, an assembled
mass-operator throughput per unique DoF that exceeds the libCEED BP1 benchmark by up to a factor of four, the margin
growing with polynomial degree.

The entire solver is implemented in Triton~\cite{triton2019}, a tile-based GPU language originating in the
machine-learning community. This choice is deliberate: Triton's programming model operates on highly efficient dense,
power-of-two tiles, and takes care of coalescing, shared-memory bank conflicts, and the mapping of tile-level tensor
contractions onto tensor cores, addressing concerns that would otherwise demand hand-tuned CUDA. Triton also sidesteps
the CUDA toolchain entirely: its compiler translates the Python-embedded kernel source, through a sequence of
intermediate representations (MLIR), down to the GPU's native instruction set: PTX, NVIDIA's low-level format, or its
AMD counterpart; so the same kernel source runs on both NVIDIA and AMD hardware without a line of vendor-specific code.
Section~\ref{sec:implementation} details this implementation, including the decomposition of non-power-of-two
polynomial degrees into power-of-two tiles and a small-tile tensor-core extension contributed upstream as part of this
work.

The primary cost of redundancy is memory footprint, and it is concentrated at the lowest order. The overhead over the
assembled vector is the asymptotic factor $\big((p+1)/p\big)^3$, which reaches $8\times$ at $p=1$ but then collapses
steeply with degree: to $3.4\times$ at $p=2$, $2.4\times$ at $p=3$, and toward $1.5\times$ at $p=7$
(Table~\ref{tab:memory_overhead}), as the element interior comes to dominate the shared interface. It does not,
however, translate into a runtime penalty. Because the format removes the uncoalesced gather-scatter and the
destination-read that burden classical assembly, the additional bytes are streamed at the device's coalesced peak
rather than fetched through indirection, and the net effect is favourable across the \emph{entire} degree range: the
assembled operator already outperforms the libCEED BP1 baseline at $p=1$ and by a widening margin thereafter
(Section~\ref{sec:results}). The redundancy thus buys a larger but fully coalesced footprint in exchange for
eliminating the indirect traffic that otherwise dominates the operator.

The remainder of the paper is organized as follows. Section~\ref{sec:cfem_dg} introduces the discrete setting,
contrasts classical matrix-free evaluation with persistent cell-wise storage, and quantifies the memory overhead
together with initial throughput measurements on the BP1 mass-operator benchmark. Section~\ref{sec:solvers} develops
the primal-dual framework and proves the equivalence theorem for the flexible conjugate gradient iteration.
Section~\ref{sec:dss} presents the layered construction of the DSS operator: the dimensionally-split structured cascade
and the unstructured interface kernels, with the shadow-cell treatment of $h$-adaptive non-conformity deferred to
Appendix~\ref{sec:dss_hanging}. Section~\ref{sec:implementation} describes the implementation in the Triton language,
including the mapping of sum-factorized kernels onto tensor cores and the handling of non-power-of-two polynomial
degrees. Section~\ref{sec:results} reports numerical results on an NVIDIA A100, and Section~\ref{sec:conclusions}
concludes.


\section{Continuous FEM with Cell-Wise Storage}
\label{sec:cfem_dg}

We consider a domain $\Omega \subset \mathbb{R}^d$ partitioned into a triangulation $\mathcal{T}_h$ of tensor-product
elements: quadrilaterals in 2D ($d=2$) and hexahedra in 3D ($d=3$). The mesh originates from a relatively coarse,
conformal base grid (no hanging nodes), which fixes the topology of the domain; the computational grid is obtained from
it by a hierarchical sequence of refinement cycles, either global (uniform) or local (adaptive). Local refinement
inherently produces hanging nodes where cells of differing refinement levels abut. The \textit{active cells} are the
elements of the hierarchy without children (the finest cells in any given region), and it is on this collection that
the continuous finite element space of polynomial degree $p$ is built. The discrete problem then reads
\begin{equation}
    A u = \bar{f},
\end{equation}
where the global operator $A \colon \mathbb{V}_{CG} \to \mathbb{V}_{CG}^*$ represents the discrete bilinear form acting between the continuous primal space $\mathbb{V}_{CG}$ and its dual $\mathbb{V}_{CG}^*$. Accordingly, $u \in \mathbb{V}_{CG}$ and $\bar{f} \in \mathbb{V}_{CG}^*$ are the global vectors of degrees of freedom (DoFs) and the right-hand side, respectively.

\subsection{Classical matrix-free evaluation and its memory-access bottleneck}
In high-performance continuous finite element frameworks, the action of $A$ is evaluated without assembling a sparse
matrix. To cleanly formalize this matrix-free workflow, we must delineate both topologies (global vs.\ cell-wise) and
mathematical spaces (primal vs.\ dual). We denote the continuous global spaces as $\mathbb{V}_{CG}$ (primal) and
$\mathbb{V}_{CG}^*$ (dual), and their redundant cell-wise counterparts as $\mathbb{V}_{\text{cell}}$ and
$\mathbb{V}_{\text{cell}}^*$, in which every element stores its own copy of the DoFs on shared interfaces.

Transitions between these topologies in the classical matrix-free evaluation are managed by two operators:
\begin{itemize}
    \item $\mathcal{G} \colon \mathbb{V}_{CG} \to \mathbb{V}_{\text{cell}}$ is the global gather operator acting on primal spaces.
    \item $\mathcal{G}^T \colon \mathbb{V}_{\text{cell}}^* \to \mathbb{V}_{CG}^*$ is the formal adjoint of $\mathcal{G}$. It acts as the scatter (assembly) operator mapping local contributions back to the global topology.
\end{itemize}

With these operators established, the typical matrix-free evaluation of $A$ is formulated as follows:
\begin{enumerate}
    \item \textbf{Gather}: Extract local DoF values from the global primal vector $u$: $u_K = \mathcal{G}_K u$.
    \item \textbf{Evaluate}: Compute the local unassembled dual contribution independently: $\tilde{f}_K = A_K u_K$.
    \item \textbf{Scatter}: Accumulate (sum) the local unassembled dual results back into the global, assembled residual vector $\bar{f}$: $\bar{f} = \sum_K \mathcal{G}_K^T \tilde{f}_K$.
\end{enumerate}

While the arithmetic of step 2 is well understood and, with sum factorization, cheap, the cost of this pipeline on
modern GPUs is dominated by steps 1 and 3. The gather and scatter are \emph{indirect} memory accesses: each thread
follows an element-to-global index map into a vector whose ordering is dictated by the global mesh numbering, not by
the access pattern of the kernel. Consecutive threads therefore touch non-consecutive addresses, and the hardware
splits each warp-wide load into many separate memory transactions.

This lack of \emph{coalescing} is an important bottleneck. The nominal device bandwidth is attainable only when the
threads of a warp access consecutive words that the memory controller can serve in a single transaction. \footnote{On
    the A100 (nominal 2\,039\,GB/s) our cell-wise kernels reach up to 1\,774.5\,GB/s in FP64. By way of reference, a simple
    FP64 vector scaling in JAX/XLA~\cite{jax2018github} (a software stack that can safely be assumed highly optimized)
    measures roughly 1\,680\,GB/s.} An uncoalesced gather may fetch an entire cache line per useful word, discarding the
bulk of the transferred data, and the scatter is worse still: concurrent accumulation into shared interface DoFs
additionally requires atomic operations or mesh coloring. In practice, classical matrix-free kernels for unstructured
meshes are thus limited by transaction and atomics throughput rather than by raw memory bandwidth: the headline
bandwidth figure of the device is structurally out of reach.

Even if that bandwidth could be reached, it (and not the arithmetic throughput) would still be the binding constraint.
The hardware landscape summarized in Table~\ref{tab:gpu_comparison_full} explains why this paper optimizes memory
traffic rather than arithmetic. For each device we list the minimum arithmetic intensity (the FLOP/byte ratio below
which a kernel is memory-bound) obtained as the quotient of peak FP64 throughput and memory bandwidth. Even on the
A100, the most bandwidth-friendly of the recent NVIDIA datacenter parts, a kernel must perform about five
double-precision operations per transferred byte (roughly 40 per double-precision value) before the compute units
matter at all; on H100-class hardware and on AMD's CDNA accelerators the threshold lies at 10--15 FLOP/byte and beyond.

The arithmetic intensity of a sum-factorized operator is not fixed: it grows with the polynomial degree, since the work
per DoF rises while the data moved per DoF does not. At low degrees the operators fall far short of the thresholds
above. On the A100 and on Cartesian geometries, the kernels of this work are firmly memory-bound up to degree $p = 2$,
so the quantity that decides their performance is not the FLOP count but how fast the required bytes can be moved,
i.e.\ the fraction of peak bandwidth actually realized.

This reframes the optimization target. The lever is not to \emph{minimize} bytes moved: our cell-wise format in fact
moves \emph{more} data than an assembled scheme, since it stores interface DoFs redundantly. What it does instead is
make every access coalesced and eliminate the read-accumulate and atomic traffic of assembly, raising the achievable
bandwidth such that this gain outweighs the added redundancy. From $p = 3$ onward this no longer holds and the kernels
leave the memory-bound regime: the pipeline carries additional operations except dense matrix-matrix multiplications,
so the arithmetic actually issued outstrips the useful-work count and compute, rather than bandwidth, becomes the
limiter.

Although this work was initially designed to target high-order discretizations, the kernels remain memory-bound across
much of the degree range; the storage strategy pays off already from degree one. Wherever bandwidth is the binding
constraint, performance is governed by how the field data are laid out and accessed.

\begin{table}[htbp]
    \centering
    \begin{tabular}{l c c c}
        \toprule
        GPU Model  & Bandwidth & Peak FP64 (Tensor) & Min.\ AI    \\
                   & (GB/s)    & (TFLOPS)           & (FLOP/byte) \\
        \midrule
        A100 (SXM) & 2,039     & 19.5               & 9.56        \\
        H100 (SXM) & 3,350     & 67.0               & 20.00       \\
        H200 (SXM) & 4,800     & 67.0               & 13.96       \\
        B200 (SXM) & 8,000     & 40.0               & 5.00        \\
        \midrule
        MI250X     & 3,277     & 95.7               & 29.20       \\
        MI300X     & 5,300     & 163.4              & 30.83       \\
        MI325X     & 6,000     & 163.4              & 27.23       \\
        \bottomrule
    \end{tabular}
    \caption{NVIDIA and AMD GPU bandwidth, peak FP64 tensor/matrix throughput, and minimum arithmetic intensity.}
    \label{tab:gpu_comparison_full}
\end{table}

\subsection{Persistent cell-wise storage}
In classical implementations, the cell-wise representation $u_K$ is never formed as a persistent global object:
fragments of it are generated on-the-fly as threads read from the global vector, processed within cache, registers, or
shared memory, and scattered back into the global dual vector. In this work we adopt the opposite strategy: the field
data are kept \emph{persistently} in the cell-wise storage format $\mathbb{V}_{\text{cell}}$, and all mathematical
operations (operator evaluations, duality pairings, vector updates, and preconditioning) are performed directly on this
redundant representation. A global, non-redundant vector is never formed or traversed.

As a result, the storage layout is decoupled from the global mesh topology. Every element owns a fixed-size,
self-contained set of DoFs, so these sets can be arranged in memory in whatever order suits the hardware, and every
kernel addresses them through statically known offsets instead of an index map. Indirect addressing disappears from the
entire solver pipeline, and with it the transaction fragmentation described above: the layout can be chosen such that
warp-wide accesses are coalesced, which makes the nominal device bandwidth attainable for finite element kernels.

\begin{table}
    \centering
    \begin{tabular}{l c c c c c}
        \toprule
        Storage unit            & DoFs per edge & Stored DoFs & Unique DoFs & \makecell{Storage        \\ratio} & \makecell{Transfer\\ratio} \\
        \midrule
        single $\mathbb{Q}_3$   & 4             & 64          & 27          & 2.37              & 1.58 \\
        single $\mathbb{Q}_7$   & 8             & 512         & 343         & 1.49              & 0.99 \\
        \midrule
        $2^d$ of $\mathbb{Q}_1$ & 3             & 27          & 8           & 3.38              & 2.25 \\
        $2^d$ of $\mathbb{Q}_2$ & 5             & 125         & 64          & 1.95              & 1.30 \\
        $2^d$ of $\mathbb{Q}_3$ & 7             & 343         & 216         & 1.59              & 1.06 \\
        \bottomrule
    \end{tabular}
    \caption{Memory overhead of cell-wise storage in 3D, for a single hexahedral element (top) and for an optional
        $2^d$-element cluster packed into one storage unit (bottom). \emph{DoFs per edge} is the number of 1D nodes along one
        edge of the storage unit; \emph{Stored DoFs} is its cell-wise DoF count, and \emph{Unique DoFs} the asymptotic number of
        distinct DoFs in the same volume of an assembled vector. The storage ratio is their quotient, i.e.\ the redundancy
        factor of the format. The transfer ratio additionally accounts for the data moved during operator evaluation: since
        cell-wise evaluation writes its output directly instead of read-accumulating into an assembled vector, the format
        becomes bandwidth-\emph{cheaper} than a traditional assembled vector for $\mathbb{Q}_7$ (ratio below one).}
    \label{tab:memory_overhead}
\end{table}

We remark that the framework also permits packing a small structured cluster of elements (e.g.\ $2^d$ neighbouring
cells) into a single logical macro-element whose interior interface DoFs are shared rather than duplicated. This
reduces the storage redundancy (cf.\ Table~\ref{tab:memory_overhead}) at the cost of slightly larger local kernels;
none of the algorithms presented below depend on this packing, and we treat it as an optional compression of the same
storage scheme. We do not explore it further in this work, but it is a straightforward extension of the same ideas and
can be used to reduce the memory footprint of the cell-wise representation.

\subsection{Initial performance measurements}
\label{sec:volumetric_perf}

To give an early indication of what the storage paradigm delivers, we measure the action of two operators on the
cell-wise storage: the mass operator and the Laplace (stiffness) operator. Both act on the local DoF vector $u_K \in
    \mathbb{V}_{\text{cell}}$, the cell-wise block of element $K$ read directly from the persistent storage, and both are
evaluated in sum-factorized form on each tensor-product cell,
\begin{align}
    M_K u_K & = (V^T \otimes V^T \otimes V^T)\, h \,(V \otimes V \otimes V)\, u_K,
    \label{eq:mass_operator}                                                       \\
    L_K u_K & = (D_x M_y M_z + M_x D_y M_z + M_x M_y D_z)\, u_K,
    \label{eq:laplace_operator}
\end{align}
where $V$ is the 1D matrix evaluating nodal values at the quadrature points, $D$ and $M$ are the 1D derivative and mass
contractions, and the scalar $h$ collects the cell-size scaling, mimicking a Cartesian grid and avoiding
per-quadrature-point geometry data. The mass operator requires six 1D contractions, the Laplace operator seven, so the
latter carries roughly three times the arithmetic; for $p = 1, 2$ both use the even--odd decomposition and for
$p \ge 3$ they do not.

These two operators bracket the behaviour of the cell-wise format. On a naive arithmetic-intensity count one would
expect the mass operator to be the worst case, since its arithmetic load is minimal and a firmly memory-bound kernel
pays in full for the redundant data, as every duplicated DoF is extra traffic. However, the opposite holds: the mass
contractions map poorly onto the tensor-core (MMA) tiles, so the kernel issues substantial extra arithmetic and is not,
in the end, limited by memory traffic. The Laplace operator, despite its higher arithmetic intensity, tiles efficiently
onto the tensor cores and sustains a high floating-point rate, so it is the operator that is memory-bound, rendering it
the worst case for the cell-wise format and hence the more informative measurement.

Tables~\ref{tab:perf_teaser} and~\ref{tab:perf_teaser_laplace} report the maximum achieved throughput for the full
range of polynomial degrees $p = 1, \dots, 7$ supported by our implementation. We quote the rate both in terms of
unique DoFs (the figure comparable across implementations) and cell-wise, duplicated DoFs (the figure reflecting the
work actually performed), together with the attained memory throughput. The operation count per element is nearly
identical to a classical matrix-free implementation, but the cell-wise variant writes its output directly instead of
read-accumulating into an assembled vector. For the Laplace operator we additionally report two arithmetic rates. The
\emph{theory} TFLOP/s is the useful work: it multiplies the cell-wise DoF rate by the floating-point operations the
sum-factorized operator of~\eqref{eq:laplace_operator} must perform per DoF. The \emph{achieved} TFLOP/s instead counts
the operations the hardware actually issues; on the tensor-core path used for $p \ge 3$ the contraction tiles are
padded, so the hardware performs more arithmetic than is necessary and the achieved rate exceeds the theoretical one.
For $p = 1, 2$ no tensor cores and hence no padding are involved, so the two rates coincide.

The tables bear this out: at the upper degrees the Laplace operator delivers the \emph{higher} throughput per unique
DoF of the two (cf.\ Tables~\ref{tab:perf_teaser} and~\ref{tab:perf_teaser_laplace}), its three-fold arithmetic
absorbed almost for free by the better-filled tensor-core tiles. The reference row of
Table~\ref{tab:perf_teaser_laplace} lists the achieved Laplace rates of Cui et al.~\cite{cui2025implementation}
(reported for $p = 2, \dots, 8$ and aligned here to the matching degree); our useful-work (\emph{theory}) rate exceeds
theirs at every common degree. At $p = 2$ this margin is moreover understated: the reference figures do not employ the
even--odd factorization, so they count arithmetic our formulation avoids, and our advantage in actual throughput
(GDoF/s) is correspondingly larger than the TFLOP/s rows alone convey. A full benchmark campaign, including layout
comparisons and the direct stiffness summation (DSS) kernels, follows in Section~\ref{sec:results}.

\begin{table}
    \centering
    \begin{tabular}{l c c c c c c c}
        \toprule
                                     & $\mathbb{Q}_1$ & $\mathbb{Q}_2$ & $\mathbb{Q}_3$ & $\mathbb{Q}_4$ & $\mathbb{Q}_5$ & $\mathbb{Q}_6$ & $\mathbb{Q}_7$ \\
        \midrule
        GDoF/s (unique)              & 14.28          & 32.98          & 39.76          & 36.88          & 30.56          & 30.28          & 40.43          \\
        GDoF/s (cell-wise)           & 111.68         & 110.16         & 93.43          & 71.55          & 52.57          & 47.85          & 60.24          \\
        Ratio (cell-wise/unique)     & 7.82           & 3.34           & 2.35           & 1.94           & 1.72           & 1.58           & 1.49           \\
        \hline
        Memory throughput {[}GB/s{]} & 1774           & 1756           & 1526           & 1187           & 841            & 766            & 964            \\
        \bottomrule
    \end{tabular}
    \caption{Maximum measured throughput of the mass operator (BP1-like benchmark) in cell-wise storage. Rates are
        reported per unique DoF (comparable to assembled implementations) and per cell-wise, duplicated DoF; the last row
        gives the attained memory throughput. The ratio of cell-wise GDoF/s to memory throughput is essentially constant
        at $1/16$ across all degrees, indicating that exactly two doubles (one read, one write) are transferred per DoF.}
    \label{tab:perf_teaser}
\end{table}

\begin{table}
    \centering
    \begin{tabular}{l c c c c c c c}
        \toprule
                                                       & $\mathbb{Q}_1$ & $\mathbb{Q}_2$ & $\mathbb{Q}_3$ & $\mathbb{Q}_4$ & $\mathbb{Q}_5$ & $\mathbb{Q}_6$ & $\mathbb{Q}_7$ \\
        \midrule
        GDoF/s (cell-wise)                             & 110.2          & 108.3          & 99.69          & 73.48          & 65.22          & 62.41          & 88.82          \\
        GDoF/s (unique)                                & 14.09          & 32.43          & 42.42          & 37.88          & 37.92          & 39.50          & 59.61          \\
        Memory throughput {[}GB/s{]}                   & 1 751          & 1727           & 1628           & 1219           & 1044           & 999            & 1421           \\
        \hline
        achieved  TFLOP/s                              & 2.81           & 3.15           & 8.14           & 7.91           & 7.08           & 10.62          & 10.15          \\
        theory TFLOP/s                                 & 2.81           & 3.15           & 5.58           & 5.14           & 5.48           & 6.12           & 9.95           \\
        reference TFLOP/s~\cite{cui2025implementation} & ---            & 1.30           & 1.32           & 2.02           & 2.21           & 1.90           & 1.90           \\
        \bottomrule
    \end{tabular}
    \caption{Maximum measured throughput of the Laplace operator in cell-wise storage, evaluated as
        $L u = (D_x M_y M_z + M_x D_y M_z + M_x M_y D_z)\,u$. Rates are reported per unique DoF (comparable to assembled
        implementations) and per cell-wise, duplicated DoF; the last row gives the attained memory throughput. The ratio
        of cell-wise GDoF/s to memory throughput is essentially constant at $1/16$ across all degrees, indicating that
        exactly two doubles (one read, one write) are transferred per DoF. The \emph{theory} row gives the useful
        arithmetic throughput obtained from the cell-wise rate and the operation count of the sum-factorized operator,
        whereas the \emph{achieved} row counts the operations the hardware actually issues, including padding. The
        \emph{reference} row reproduces the achieved Laplace rates of Cui et al.~\cite{cui2025implementation}, reported
        for $p = 2, \dots, 8$ and aligned here to the matching degree (their $p=8$ value falls outside our range);
        unlike ours, their kernels do not use the even--odd factorization at low order.}
    \label{tab:perf_teaser_laplace}
\end{table}

\section{Krylov Subspace Methods with explicit Primal-Dual pairing}
\label{sec:solvers}

In a cell-wise (DG-like) storage scheme for continuous finite elements, the global degrees of freedom are duplicated
across shared element interfaces and hanging nodes. This duplication might appear to be a liability: every operation
that touches shared DoFs seems to require synchronization of the redundant copies. However, the opposite holds,
provided one maintains a mathematical distinction between two kinds of objects living in the redundant storage:
continuous \emph{primal} fields, whose duplicated entries agree by construction, and broken \emph{dual} vectors of
local cell-wise integrals, which are meaningful without any summation. Krylov subspace methods, when written so that
every inner product pairs one object of each kind, never need to convert between the two representations. We exploit
this to reformulate the (flexible) conjugate gradient method so that the operator application, all vector updates, and
all duality pairings are evaluated without any inter-element communication, and we prove that the resulting iteration
is identical, iterate by iterate, to the classical solver applied to the assembled system. The only component that must
bridge the two representations is the preconditioner, which we address in Section~\ref{sec:trivial_preconditioner}.

\subsection{Constraint Operator and Duality Pairing Identity}
Let $\mathbb{V}_{\text{cell}}$ denote the high-dimensional broken space encompassing all local degrees of freedom
stored independently on each cell. We define the continuous subspace as the image of the gather operator
$\mathcal{G}(\mathbb{V}_{CG}) \subset \mathbb{V}_{\text{cell}}$, representing continuous fields.

We introduce a constraint matrix $C \colon \mathbb{V}_{\text{cell}} \to \mathbb{V}_{\text{cell}}$ that projects any
broken vector onto the continuous subspace. The exact formulation of $C$ can vary (e.g., averaging values on shared
faces), but its defining mathematical property is that it is idempotent: $C = C^2$, with its range being exactly
$\mathcal{G}(\mathbb{V}_{CG})$. Consequently, any vector $u \in \mathbb{V}_{\text{cell}}$ is a valid continuous primal
vector if and only if it is a fixed point of the constraint operator:
\begin{equation} \label{eq:constraint}
    u = C u.
\end{equation}
Because $C$ can represent any idempotent projection, Eq.~\eqref{eq:constraint} handles complex spatial constraints, ensuring that hanging nodes are slaved to their coarse neighbors.

Conversely, the dual space $\mathbb{V}_{\text{cell}}^*$ is the space of linear functionals (namely, the raw,
unassembled local residuals and fluxes. Dual vectors, denoted $\tilde{f} \in \mathbb{V}_{\text{cell}}^*$, represent
local integrals and do not require inter-element summation to be valid. The true assembled global residual
representation is recovered via the transpose of the constraint matrix, $C^T \tilde{f}$, which physically represents
the accumulation across interfaces.

The advantage of cell-wise storage is evident when evaluating the standard Euclidean duality pairing $\langle \cdot,
    \cdot \rangle$ between an unassembled dual vector and a continuous primal vector.

\begin{lemma}[Duality Pairing Identity]
    \label{lem:inner_product}
    Let $\tilde{f} \in \mathbb{V}_{\text{cell}}^*$ be an unassembled dual vector and $u \in \mathbb{V}_{\text{cell}}$ be a continuous primal vector satisfying $u = C u$. The duality pairing between $\tilde{f}$ and $u$ satisfies:
    \begin{equation} \label{eq:inner_product}
        \langle \tilde{f}, u \rangle = \langle C^T \tilde{f}, u \rangle.
    \end{equation}
\end{lemma}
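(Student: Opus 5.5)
The identity is a one-line consequence of the definition of the transpose combined with the fixed-point characterization \eqref{eq:constraint}. I would start from the right-hand side and move $C^T$ across the Euclidean pairing. For any $\tilde f \in \mathbb{V}_{\text{cell}}^*$ and $v \in \mathbb{V}_{\text{cell}}$, the transpose is characterized by $\langle C^T \tilde f, v\rangle = \langle \tilde f, C v\rangle$. Here the pairing on $\mathbb{V}_{\text{cell}}^* \times \mathbb{V}_{\text{cell}}$ is the componentwise Euclidean sum over all stored cell-wise entries, and $C^T$ is the matrix transpose with respect to that pairing. Taking $v = u$ gives $\langle C^T\tilde f, u\rangle = \langle \tilde f, Cu\rangle$. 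The hypothesis $u = Cu$ then turns the right-hand side into $\langle \tilde f, u\rangle$, which is \eqref{eq:inner_product}.

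Only two facts are used. The first is the adjoint relation, which holds by definition of $C^T$ for any linear $C$. The second is that $u$ is a fixed point of $C$. Neither idempotency ($C^2=C$) nor the specific form of $C$ (averaging, hanging-node interpolation) is needed for the identity itself. Those properties matter only to guarantee that the continuous fields are exactly the fixed points of $C$, which the preceding discussion already establishes.

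To connect this with the assembled pairing, I would add a remark that is not required by the lemma. In the conforming case one may take $C = \mathcal{G}\,(\mathcal{G}^T\mathcal{G})^{-1}\mathcal{G}^T$ (multiplicity averaging). Writing $u = \mathcal{G}\bar u$ gives $\langle \tilde f, \mathcal{G}\bar u\rangle = \langle \mathcal{G}^T \tilde f, \bar u\rangle$. This shows the local pairing reproduces the assembled dual vector paired with the unique-DoF primal vector, and it is the interpretation used later for the Krylov scalars.

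There is no real obstacle here. The only point needing care is bookkeeping: the pairing must be the unweighted Euclidean one on the redundant storage, so that no multiplicity weights appear. If a weighted pairing were used instead, $C^T$ would have to be replaced by the adjoint with respect to that weight. I would state the convention explicitly before the one-line computation.
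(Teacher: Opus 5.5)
Your proof is correct and is the same one-line argument the paper gives: substitute $u = Cu$ and move $C$ across the Euclidean pairing via the transpose, $\langle \tilde{f}, u\rangle = \langle \tilde{f}, Cu\rangle = \langle C^T\tilde{f}, u\rangle$. Your side remark, $\langle \tilde{f}, \mathcal{G}\bar{u}\rangle = \langle \mathcal{G}^T\tilde{f}, \bar{u}\rangle$, matches the consequence the paper draws right after the lemma.
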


\begin{proof}
    Substituting the continuity constraint $u = C u$ into the duality pairing and applying the property of the matrix transpose yields:
    \begin{equation*}
        \langle \tilde{f}, u \rangle = \langle \tilde{f}, C u \rangle = \langle C^T \tilde{f}, u \rangle.
    \end{equation*}
\end{proof}

Note that the fixed-point property of continuous fields can be written as $C \mathcal{G} = \mathcal{G}$, since
$\text{Im}(\mathcal{G}) = \text{Im}(C)$ and $C$ acts as the identity on its range. An immediate consequence of
Lemma~\ref{lem:inner_product} is that the pairing is also independent of the particular choice of $C$ and recovers the
pairing on the minimal global space: for $u = \mathcal{G} u^{\text{global}}$ with $u^{\text{global}} \in
    \mathbb{V}_{CG}$,
\begin{equation} \label{eq:pairing_global}
    \langle \tilde{f}, u \rangle = \langle \tilde{f}, \mathcal{G} u^{\text{global}} \rangle = \langle \mathcal{G}^T \tilde{f}, u^{\text{global}} \rangle,
\end{equation}
i.e.\ the local pairing of an unassembled dual vector with a continuous primal vector equals the global pairing of the assembled functional $\mathcal{G}^T \tilde{f}$ with the underlying global coefficient vector.

This simple identity allows us to compute the global duality pairing using only local data. Given the primal vector $u$
satisfying the constraints, pairing it with the raw cell-wise residuals is enough to recover the global result. This
allows us to calculate accurate global norms and duality pairings without ever needing to perform an explicit assembly.

With this duality established, we can formally state the governing equation of the continuous finite element problem.
The true continuous problem seeks a primal vector $u \in \mathbb{V}_{\text{cell}}$ satisfying the continuity constraint
$u = C u$ such that the globally assembled residual vanishes for any continuous test function. Let $\tilde{b} \in
    \mathbb{V}_{\text{cell}}^*$ be the unassembled right-hand side. The globally assembled linear system is formally
expressed as:
\begin{equation} \label{eq:constrained_system}
    C^T A C u = C^T \tilde{b},
\end{equation}
where $C^T A C \colon \mathbb{V}_{\text{cell}} \to \mathbb{V}_{\text{cell}}^*$ represents the complete system operator embedded within the redundant space. This constrained formulation is equivalent to the classical minimal global system $A_{CG} u^{\text{global}} = b_{CG}$ posed over the continuous space $\mathbb{V}_{CG}$, where $A_{CG} = \mathcal{G}^T A \mathcal{G}$ is the explicitly assembled operator, $b_{CG} = \mathcal{G}^T \tilde{b}$, and the corresponding solutions map identically via $u = \mathcal{G} u^{\text{global}}$.

To deploy this identity within a preconditioned Krylov solver, we must distinguish how operators map between the primal
and dual spaces. Central to the problem definition is the local operator $A$, which maps a continuous primal vector to
an unassembled dual residual. In a preconditioned solver, an additional ingredient is the preconditioner $P$, which
attempts to invert $A$ approximately by mapping a dual residual back to a primal correction. Concretely:
\begin{itemize}
    \item \textbf{The Local Operator ($A$):} The application of the local element matrices maps a continuous primal search direction $p$ to an unassembled dual flux $\tilde{q}$. No inter-element communication is performed. $A \colon \mathbb{V}_{\text{cell}} \to \mathbb{V}_{\text{cell}}^*$.
    \item \textbf{The Preconditioner ($P$):} The preconditioner is responsible for bridging the two spaces. It represents an operator that, given a dual vector $\tilde{r} \in \mathbb{V}_{\text{cell}}^*$ (in cell-wise format), returns a primal vector $z \in \mathbb{V}_{\text{cell}}$ that is an approximation of the solution satisfying the constraints defined by $C$. $P \colon \mathbb{V}_{\text{cell}}^* \to \mathbb{V}_{\text{cell}}$.
\end{itemize}

\subsection{The Conjugate Gradient Algorithm}
The Conjugate Gradient method naturally arises from the minimization of the quadratic energy functional $\mathcal{J}(u)
    = \frac{1}{2} \langle A u, u \rangle - \langle \tilde{b}, u \rangle$ over the continuous primal space. The variation
(gradient) of this functional yields the residual $\tilde{r} = \tilde{b} - A u$. Because the operator $A$ maps
continuous fields to distributions, this residual resides in the dual space $\mathbb{V}_{\text{cell}}^*$.

This poses a mathematical challenge: we cannot directly update the primal solution $u$ using the dual residual
$\tilde{r}$, as adding a functional to a field is an ill-defined operation. The preconditioner $P \colon
    \mathbb{V}_{\text{cell}}^* \to \mathbb{V}_{\text{cell}}$ serves as the mathematical bridge between these spaces. It
maps the unassembled dual residual back to a continuous primal correction $z = P(\tilde{r})$ while implicitly defining
the preconditioned inner product structure which governs the Krylov subspace.

To minimize the energy functional along a continuous search direction $p_k \in \mathbb{V}_{\text{cell}}$, the step size
$\alpha_k$ is obtained by enforcing orthogonality between the updated dual residual and the primal search direction
($\langle \tilde{r}_{k+1}, p_k \rangle = 0$). In standard PCG, this yields:
\begin{equation} \label{eq:alpha}
    \alpha_k = \frac{\langle \tilde{r}_k, z_k \rangle}{\langle \tilde{q}_k, p_k \rangle},
\end{equation}
where $\tilde{q}_k = A p_k$. In the context of cell-wise storage, the numerator pairs a dual residual with a primal correction, and the denominator pairs a dual flux with a primal search direction. By Lemma~\ref{lem:inner_product}, both of these duality pairings can be evaluated using only local, unassembled cell-wise operations.

In principle, the primal-dual structure above applies verbatim to standard PCG. Our motivation for going beyond it is
hardware-driven: GPUs offer substantially higher throughput in reduced precision (single or even half), and the natural
place to exploit this is inside the preconditioner~\cite{goddeke2007performance}, e.g.\ by running the multigrid
V-cycle in lower precision or varying its cycle structure between iterations. Such a preconditioner is no longer a
fixed linear operator. Standard PCG, however, relies on a static preconditioner to maintain the mutual $A$-conjugacy of
the search directions; if $P$ changes between iterations, its convergence degrades or fails outright. We therefore
employ the Flexible Conjugate Gradient (FCG) method proposed by Notay~\cite{notay2000flexible}. Since the two methods
differ only in the formula for $\beta_k$, we state them in primal-dual form as a single Algorithm~\ref{alg:fcg}, with
the two $\beta_k$ variants given side by side: in both cases every scalar is a duality pairing of one dual and one
primal vector, computable from local data by Lemma~\ref{lem:inner_product}. Since the primal-dual reasoning is
identical for PCG and FCG, we present the proofs for the more general FCG variant only.

FCG modifies the search direction update to explicitly enforce $A$-conjugacy against the varying preconditioner,
removing the reliance on a static inner product geometry. The updated search direction is $p_{k+1} = z_{k+1} + \beta_k
    p_k$, where:
\begin{equation} \label{eq:beta}
    \beta_k = \frac{- \langle \tilde{q}_k, z_{k+1} \rangle}{\langle \tilde{q}_k, p_k \rangle}.
\end{equation}
FCG preserves the necessary primal-dual alignment. The new preconditioned residual $z_{k+1}$ is primal, and the local operator output $\tilde{q}_k$ is dual. Thus, the FCG numerator $\langle \tilde{q}_k, z_{k+1} \rangle$ computes the global $A$-conjugacy locally, circumventing the need to explicitly assemble the operator output.

This flexibility comes at the modest price of one additional scalar product per iteration. In standard PCG the
numerator of $\beta_k$, $\langle \tilde{r}_{k+1}, z_{k+1} \rangle$, is reused as the numerator of $\alpha_{k+1}$ in the
next iteration, so $\beta_k$ requires no new reduction. The FCG numerator $\langle \tilde{q}_k, z_{k+1} \rangle$, by
contrast, must be computed in addition to it, while the shared denominator $\langle \tilde{q}_k, p_k \rangle$ is
already available from the computation of $\alpha_k$. Since each iteration is dominated by the operator and
preconditioner applications, this extra local pairing is negligible in practice.

By adopting this primal-dual algorithmic structure, the entire Krylov solver is reduced to parallel, local cell-wise
operations. The requirement for communication, interface accumulation, and hanging-node resolution is completely
abstracted away from the Krylov subspace projection and delegated exclusively to the preconditioner. The resulting
routine is summarized in Algorithm~\ref{alg:fcg}, covering both the static (PCG) and flexible (FCG) choices of
$\beta_k$. We establish that this local formulation is equivalent to solving the globally assembled system, by the
theorem below.

\begin{theorem}[Equivalence of Communication-Free FCG]\label{thm:equivalence}
    Assume the local operator $A \colon \mathbb{V}_{\text{cell}} \to \mathbb{V}_{\text{cell}}^*$ is symmetric. Let the initial guess $u_0 \in \mathbb{V}_{\text{cell}}$ satisfy $u_0 = \mathcal{G} u_0^{\text{global}}$ for some $u_0^{\text{global}} \in \mathbb{V}_{CG}$, and assume the preconditioner is \emph{consistent}: at every iteration $k$ there exists a global preconditioner $P_{CG,k} \colon \mathbb{V}_{CG}^* \to \mathbb{V}_{CG}$ such that
    \begin{equation} \label{eq:precond_consistency}
        P_k = \mathcal{G} \, P_{CG,k} \, \mathcal{G}^T.
    \end{equation}
    In particular, $P_k$ acts on a dual vector only through its assembly $\mathcal{G}^T \tilde{r}$, and its output is continuous, $\text{Im}(P_k) \subseteq \text{Im}(C)$. Then the sequence of primal iterates $\{u_k\}$ generated by the communication-free FCG algorithm (Algorithm~\ref{alg:fcg}) is identical to the sequence of iterates $\{u_k^{\text{global}}\} \subset \mathbb{V}_{CG}$ produced by standard FCG applied to the explicitly assembled minimal global system $A_{CG} u^{\text{global}} = b_{CG}$ with the preconditioners $P_{CG,k}$: $u_k = \mathcal{G} u_k^{\text{global}}$ for all $k \geq 0$.
\end{theorem}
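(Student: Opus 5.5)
We argue by induction on $k$, carrying a stronger invariant than the one stated. At each step the local primal quantities are gathered copies of their global counterparts, the local dual quantities assemble to their global counterparts, and every scalar agrees. Concretely, let the global FCG produce $u_k^{\text{global}}, r_k^{\text{global}}, z_k^{\text{global}}, p_k^{\text{global}}, q_k^{\text{global}}$ with $q_k^{\text{global}} = A_{CG} p_k^{\text{global}}$. The invariant is
\begin{equation*}
u_k = \mathcal{G} u_k^{\text{global}},\quad
\mathcal{G}^T \tilde{r}_k = r_k^{\text{global}},\quad
z_k = \mathcal{G} z_k^{\text{global}},\quad
p_k = \mathcal{G} p_k^{\text{global}},\quad
\mathcal{G}^T \tilde{q}_k = q_k^{\text{global}}.
\end{equation*}
Together with these we show $\alpha_k$ and $\beta_k$ coincide with their global versions. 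Throughout, the only tools are linearity of $\mathcal{G}$ and $\mathcal{G}^T$, the consistency assumption \eqref{eq:precond_consistency}, and the global form \eqref{eq:pairing_global} of Lemma~\ref{lem:inner_product}. That form says $\langle \tilde{f}, \mathcal{G} v\rangle = \langle \mathcal{G}^T\tilde{f}, v\rangle$ for all $\tilde{f}$ and $v$; it is simply the definition of the adjoint, and it needs no particular choice of $C$.

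\textbf{Base case.} We have $u_0 = \mathcal{G}u_0^{\text{global}}$ by hypothesis. Then $\tilde r_0 = \tilde b - A u_0$ gives
\begin{equation*}
\mathcal{G}^T\tilde r_0 = b_{CG} - \mathcal{G}^T A\mathcal{G} u_0^{\text{global}} = b_{CG} - A_{CG}u_0^{\text{global}} = r_0^{\text{global}}.
\end{equation*}
Consistency gives $z_0 = P_0\tilde r_0 = \mathcal{G}P_{CG,0}\mathcal{G}^T\tilde r_0 = \mathcal{G} z_0^{\text{global}}$, and hence $p_0 = z_0 = \mathcal{G} p_0^{\text{global}}$. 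Finally, $\tilde q_0 = A p_0 = A\mathcal{G}p_0^{\text{global}}$, so $\mathcal{G}^T\tilde q_0 = A_{CG}p_0^{\text{global}}$.

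\textbf{Inductive step.} Assume the invariant at step $k$. Applying \eqref{eq:pairing_global} to each pairing gives
\begin{equation*}
\langle\tilde r_k, z_k\rangle = \langle r_k^{\text{global}}, z_k^{\text{global}}\rangle,\qquad
\langle\tilde q_k, p_k\rangle = \langle q_k^{\text{global}}, p_k^{\text{global}}\rangle,
\end{equation*}
so $\alpha_k$ agrees with the global step size. The primal update $u_{k+1} = u_k + \alpha_k p_k$ is a linear combination of gathered vectors with identical scalars, so $u_{k+1} = \mathcal{G}u_{k+1}^{\text{global}}$. The dual update $\tilde r_{k+1} = \tilde r_k - \alpha_k\tilde q_k$ assembles under $\mathcal{G}^T$ to $r_{k+1}^{\text{global}}$. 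Consistency then gives $z_{k+1} = \mathcal{G}z_{k+1}^{\text{global}}$. For $\beta_k$, the numerator satisfies $\langle \tilde q_k, z_{k+1}\rangle = \langle q_k^{\text{global}}, z_{k+1}^{\text{global}}\rangle$, and the denominator was already matched above. For the PCG variant the numerator is $\langle\tilde r_{k+1}, z_{k+1}\rangle$, which matches in the same way. Thus $p_{k+1} = \mathcal{G}p_{k+1}^{\text{global}}$, and $\tilde q_{k+1} = A\mathcal{G}p_{k+1}^{\text{global}}$ assembles to $A_{CG}p_{k+1}^{\text{global}}$. This closes the induction.

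\textbf{Main point of care.} The main obstacle is bookkeeping rather than analysis, and it lies in the dual vectors. The local $\tilde r_k$ and $\tilde q_k$ are not themselves the global residuals; they agree with them only after assembly. The induction must therefore track $\mathcal{G}^T\tilde r_k$ rather than $\tilde r_k$. Without consistency, $P_k$ could see parts of $\tilde r_k$ that are invisible to $\mathcal{G}^T$, and agreement would fail; this is exactly what \eqref{eq:precond_consistency} excludes. The hypothesis that $A$ is symmetric is not needed for the identity of iterates. It enters only to make the global FCG, with $A_{CG} = \mathcal{G}^TA\mathcal{G}$ symmetric, the meaningful comparison method.

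If a breakdown occurs ($\langle\tilde q_k, p_k\rangle = 0$), it occurs simultaneously in both iterations, because the scalars coincide. The statement is then understood for all $k$ up to that point.
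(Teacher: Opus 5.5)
Your proof is correct and follows essentially the same route as the paper. Both run an induction on $u_k=\mathcal{G}u_k^{\text{global}}$, $p_k=\mathcal{G}p_k^{\text{global}}$ and $\mathcal{G}^T\tilde r_k=r_k^{\text{global}}$, with $z_k=\mathcal{G}z_k^{\text{global}}$ supplied by \eqref{eq:precond_consistency}, and match $\alpha_k,\beta_k$ via the adjoint identity \eqref{eq:pairing_global}; your extra invariant for $\mathcal{G}^T\tilde q_k$, the PCG case, and the breakdown and symmetry remarks are harmless additions (the paper likewise uses symmetry only to make the global comparison method meaningful).
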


\begin{proof}
    Throughout, denote by $u_k^{\text{global}}, p_k^{\text{global}}, z_k^{\text{global}} \in \mathbb{V}_{CG}$ and $r_k^{\text{global}} \in \mathbb{V}_{CG}^*$ the quantities produced by standard FCG applied to $A_{CG} u^{\text{global}} = b_{CG}$ with the preconditioners $P_{CG,k}$, and recall $A_{CG} = \mathcal{G}^T A \mathcal{G}$, $b_{CG} = \mathcal{G}^T \tilde{b}$. We prove by induction the following correspondence between the two iterations:
    \begin{equation} \label{eq:induction_hypothesis}
        u_k = \mathcal{G} u_k^{\text{global}}, \qquad
        p_k = \mathcal{G} p_k^{\text{global}}, \qquad
        \mathcal{G}^T \tilde{r}_k = r_k^{\text{global}}.
    \end{equation}

    \emph{Base case.} By assumption $u_0 = \mathcal{G} u_0^{\text{global}}$. For the residual,
    \begin{equation*}
        \mathcal{G}^T \tilde{r}_0
        = \mathcal{G}^T (\tilde{b} - A u_0)
        = \mathcal{G}^T \tilde{b} - \mathcal{G}^T A \mathcal{G} u_0^{\text{global}}
        = b_{CG} - A_{CG} u_0^{\text{global}}
        = r_0^{\text{global}}.
    \end{equation*}
    By consistency~\eqref{eq:precond_consistency}, $z_0 = P_0 \tilde{r}_0 = \mathcal{G} P_{CG,0} \mathcal{G}^T \tilde{r}_0 = \mathcal{G} P_{CG,0} r_0^{\text{global}} = \mathcal{G} z_0^{\text{global}}$, and hence $p_0 = z_0 = \mathcal{G} z_0^{\text{global}} = \mathcal{G} p_0^{\text{global}}$.

    \emph{Equality of the scalars.} Assume~\eqref{eq:induction_hypothesis} and $z_k = \mathcal{G} z_k^{\text{global}}$ hold at step $k$. All vectors $u_k, p_k, z_k$ lie in $\text{Im}(\mathcal{G}) = \text{Im}(C)$ and are therefore fixed points of $C$, so Lemma~\ref{lem:inner_product} in the form~\eqref{eq:pairing_global} applies to every duality pairing in the algorithm. With $\tilde{q}_k = A p_k$:
    \begin{align*}
        \langle \tilde{r}_k, z_k \rangle     & = \langle \mathcal{G}^T \tilde{r}_k, z_k^{\text{global}} \rangle = \langle r_k^{\text{global}}, z_k^{\text{global}} \rangle,                                        \\
        \langle \tilde{q}_k, p_k \rangle     & = \langle \mathcal{G}^T A \mathcal{G} p_k^{\text{global}}, p_k^{\text{global}} \rangle = \langle A_{CG}\, p_k^{\text{global}}, p_k^{\text{global}} \rangle,         \\
        \langle \tilde{q}_k, z_{k+1} \rangle & = \langle \mathcal{G}^T A \mathcal{G} p_k^{\text{global}}, z_{k+1}^{\text{global}} \rangle = \langle A_{CG}\, p_k^{\text{global}}, z_{k+1}^{\text{global}} \rangle.
    \end{align*}
    These are exactly the numerators and denominators of $\alpha_k$ and $\beta_k$ in the global algorithm, hence $\alpha_k = \alpha_k^{\text{global}}$ and $\beta_k = \beta_k^{\text{global}}$. (Symmetry of $A$ guarantees, as in the standard setting, that $A_{CG}$ is symmetric and the FCG scalars are well defined.)

    \emph{Induction step.} Using $\alpha_k = \alpha_k^{\text{global}}$ and linearity of $\mathcal{G}$ and $\mathcal{G}^T$:
    \begin{align*}
        u_{k+1}                       & = u_k + \alpha_k p_k = \mathcal{G}\,(u_k^{\text{global}} + \alpha_k p_k^{\text{global}}) = \mathcal{G} u_{k+1}^{\text{global}},                                                 \\
        \mathcal{G}^T \tilde{r}_{k+1} & = \mathcal{G}^T \tilde{r}_k - \alpha_k \mathcal{G}^T A \mathcal{G} p_k^{\text{global}} = r_k^{\text{global}} - \alpha_k A_{CG}\, p_k^{\text{global}} = r_{k+1}^{\text{global}}.
    \end{align*}
    Consistency~\eqref{eq:precond_consistency} then gives $z_{k+1} = \mathcal{G} P_{CG,k+1} \mathcal{G}^T \tilde{r}_{k+1} = \mathcal{G} z_{k+1}^{\text{global}}$, and with $\beta_k = \beta_k^{\text{global}}$,
    \begin{equation*}
        p_{k+1} = z_{k+1} + \beta_k p_k = \mathcal{G}\,(z_{k+1}^{\text{global}} + \beta_k p_k^{\text{global}}) = \mathcal{G} p_{k+1}^{\text{global}},
    \end{equation*}
    which establishes~\eqref{eq:induction_hypothesis} at step $k+1$ and completes the induction.
\end{proof}

\begin{remark}
    Continuity of the preconditioner output alone ($\text{Im}(P) \subseteq \text{Im}(C)$) is \emph{not} sufficient for the theorem: it keeps the iterates in the continuous subspace, but the local preconditioner could still distinguish between two unassembled residuals with the same assembly $\mathcal{G}^T \tilde{r}$, in which case its iterates correspond to no assembled solver. The factorization~\eqref{eq:precond_consistency} excludes this. All preconditioners constructed in this work satisfy it by design: the trivial preconditioner $S = \mathcal{G} \mathcal{I}_{\text{Riesz}} \mathcal{G}^T$ of Section~\ref{sec:trivial_preconditioner} matches~\eqref{eq:precond_consistency} with $P_{CG} = \mathcal{I}_{\text{Riesz}}$, the Jacobi variant with $P_{CG} = D_{CG}^{-1} \mathcal{I}_{\text{Riesz}}$, and the multigrid V-cycle consumes its input only through the DSS operator $S$, i.e.\ through $\mathcal{G}^T \tilde{r}$.
\end{remark}

\begin{remark}[Beyond DSS]\label{rem:beyond_dss}
    The theorem does not single out DSS as the mechanism enforcing continuity: any preconditioner of the
    form~\eqref{eq:precond_consistency} qualifies, i.e.\ any operator that consumes the residual only through its
    assembly $\mathcal{G}^T \tilde{r}$ and returns a vector in the continuous subspace (or, more generally, in the
    constrained subspace $\text{Im}(\mathcal{G})$ when hanging-node or boundary constraints are present). In this work
    continuity is restored by explicit DSS sweeps inside the smoother, but overlapping patch smoothers of Schwarz
    type~\cite{pavarino1993additive,ArnoldFalkWinther00,fischer2000overlapping,WitteArndtKanschat21,brubeck2021scalable,wichrowski2025local}
    achieve the same effect structurally: each patch solve acts on assembled patch-local data and its correction is
    continuous by construction, so the factorization~\eqref{eq:precond_consistency} holds with $P_{CG}$ the assembled
    patch smoother. The communication-free Krylov framework is therefore compatible with this entire class of
    smoothers without modification.
\end{remark}

As a direct corollary from the theorem above, provided the standard explicitly assembled FCG algorithm converges for
the given problem, the communication-free algorithm minimizes the global energy functional and converges to the true
continuous solution $u = \mathcal{G} u^{\text{global}}$.

\begin{algorithm}
    \caption{Communication-Free (Flexible) Preconditioned Conjugate Gradients}
    \label{alg:fcg}
    \begin{algorithmic}[1]
        \Require Initial guess $u_0$ (Primal), RHS $\tilde{b}$ (Dual), Local Operator $A$, Preconditioner $P$
        \State $\tilde{q}_{\text{tmp}} \gets A u_0$ \Comment{Local $A$: Primal $\to$ Dual}
        \State $\tilde{r}_0 \gets \tilde{b} - \tilde{q}_{\text{tmp}}$ \Comment{Local Update: Dual}
        \State $z_0 \gets P(\tilde{r}_0)$ \Comment{Preconditioner: Dual $\to$ Primal}
        \State $p_0 \gets z_0$ \Comment{Local Copy: Primal}
        \State $k \gets 0$
        \While{not converged}
        \State $\tilde{q}_k \gets A p_k$ \Comment{Local $A$: Primal $\to$ Dual}
        \State $\alpha_k \gets \frac{\langle \tilde{r}_k, z_k \rangle}{\langle \tilde{q}_k, p_k \rangle}$ \Comment{Local Duality Pairings: (Dual $\cdot$ Primal)}
        \State $u_{k+1} \gets u_k + \alpha_k p_k$ \Comment{Local Update: Primal}
        \State $\tilde{r}_{k+1} \gets \tilde{r}_k - \alpha_k \tilde{q}_k$ \Comment{Local Update: Dual}
        \If{convergence criteria met}
        \State \textbf{break}
        \EndIf
        \State $z_{k+1} \gets P(\tilde{r}_{k+1})$ \Comment{Preconditioner: Dual $\to$ Primal}
        \State $\beta_k \gets
            \begin{cases}
                \dfrac{\langle \tilde{r}_{k+1}, z_{k+1} \rangle}{\langle \tilde{r}_k, z_k \rangle} & \text{(PCG, static $P$)}   \\[1.2em]
                \dfrac{- \langle \tilde{q}_k, z_{k+1} \rangle}{\langle \tilde{q}_k, p_k \rangle}   & \text{(FCG, flexible $P$)}
            \end{cases}$ \Comment{Local Duality Pairings: (Dual $\cdot$ Primal)}
        \State $p_{k+1} \gets z_{k+1} + \beta_k p_k$ \Comment{Local Update: Primal}
        \State $k \gets k + 1$
        \EndWhile
    \end{algorithmic}
\end{algorithm}

\subsection{Simple Preconditioners and Direct Stiffness Summation (DSS)}
\label{sec:trivial_preconditioner}

As established in Algorithm~\ref{alg:fcg}, the communication-free FCG solver inherently requires a preconditioner $P
    \colon \mathbb{V}_{\text{cell}}^* \to \mathbb{V}_{\text{cell}}$ to map the unassembled dual residual back to a
continuous primal correction. A natural question arises: what constitutes the simplest possible, or \emph{trivial},
preconditioner in this framework?

Algebraically, one might consider the identity matrix $P = I$. However, directly equating a functional to a field
without a change of basis is mathematically ill-posed. The \emph{identity} operation between the dual space and the
primal space is the Riesz isomorphism. Because the unassembled dual residuals in $\mathbb{V}_{\text{cell}}^*$ represent
partial, localized integrals, applying this mapping requires a topological transition.

To formalize this, we conceptually project the operations through the continuous global topology. First, applying the
scatter operator $\mathcal{G}^T \colon \mathbb{V}_{\text{cell}}^* \to \mathbb{V}_{CG}^*$ performs an additive assembly
across shared element boundaries, reconstructing a complete, globally defined functional. Within this assembled space,
the discrete Euclidean Riesz isomorphism $\mathcal{I}_{\text{Riesz}} \colon \mathbb{V}_{CG}^* \to \mathbb{V}_{CG}$ acts
algebraically as a numerical identity on the coefficient vectors, reinterpreting the accumulated integrals as
continuous nodal values. Finally, the primal gather operator $\mathcal{G} \colon \mathbb{V}_{CG} \to
    \mathbb{V}_{\text{cell}}$ extracts this assembled $C^0$-continuous field back into the redundant block format.

This composite mapping defines the operator
\begin{equation} \label{eq:dss_operator}
    S = \mathcal{G} \mathcal{I}_{\text{Riesz}} \mathcal{G}^T \colon \mathbb{V}_{\text{cell}}^* \to \mathbb{V}_{\text{cell}},
\end{equation}
which is known as \emph{Direct Stiffness Summation} (DSS). The trivial preconditioner, which plays the role of the identity in the assembled setting, is given by $P = S$. Note that $S$ satisfies the continuity requirement of the equivalence theorem by construction: its image is exactly $\mathcal{G}(\mathbb{V}_{CG}) = \text{Im}(C)$.

This composite formulation also extends to basic relaxation schemes, such as the Jacobi preconditioner, which requires
an inverse diagonal modifier $D_{CG}^{-1}$. In a classical assembled framework, scaling must occur in the global space
before gathering. However, the diagonal scaling intertwines with the gather operator: let $d \in \mathbb{V}_{CG}$ be
the vector of diagonal entries of $D_{CG}$, and let $D_{\text{cell}} = \operatorname{diag}(\mathcal{G} d)$ be the
diagonal operator on $\mathbb{V}_{\text{cell}}$ whose entries are the gathered copies of $d$. Since $\mathcal{G}$
merely replicates entries, scaling before gathering equals gathering and then scaling each copy by the same factor,
i.e.\ $\mathcal{G} D_{CG}^{-1} = D_{\text{cell}}^{-1} \mathcal{G}$. Consequently,
\begin{equation}
    \mathcal{G} D_{CG}^{-1} \mathcal{I}_{\text{Riesz}} \mathcal{G}^T = D_{\text{cell}}^{-1} (\mathcal{G} \mathcal{I}_{\text{Riesz}} \mathcal{G}^T) = D_{\text{cell}}^{-1} S.
\end{equation}
This justifies evaluating the unscaled DSS first, and subsequently applying the pointwise scaling directly onto the localized cell vectors (i.e., $P = D_{\text{cell}}^{-1} S$).

The inter-cell communication is effectively moved into the preconditioner, while the outer Krylov solver remains local.
While the operator $S$ provides the essential bridge for Krylov preconditioners and the multigrid smoother, its
definition explicitly relies on $\mathcal{G}$ and $\mathcal{G}^T$. Executing these sparse mappings translates to severe
gather-scatter memory latency on modern hardware. The following section details how our algorithmic design allows for
circumventing the global memory bottleneck.

\section{Algorithmic Design of the DSS Operator}
\label{sec:dss}

The objective of our algorithmic design is to eliminate the global gather-scatter operations. Rather than routing data
through the indirect indices of a global vector, our DSS algorithms exploit the dense localization of the block layout:
partial sums are exchanged directly across cell interfaces via symmetric point-to-point transmissions, reproducing the
action of $S$ while operating within the high-bandwidth, vectorized $\mathbb{V}_{\text{cell}}$ storage format. The
construction is layered in three stages of increasing topological generality, each presented in its own subsection: a
dimensionally-split sweep that resolves all interfaces interior to a structured block
(Section~\ref{sec:dss_structured}) and a set of kernels for the unstructured interfaces between macro-blocks
(Section~\ref{sec:dss_unstructured}). The stages act on disjoint sets of degrees of freedom and compose into the full
operator $S$. A pre-/post-processing wrapper that further reduces non-conforming interfaces of $h$-adaptive meshes to
the conforming case is deferred to Appendix~\ref{sec:dss_hanging}.

\subsection{Base Algorithm for Structured Grids}
\label{sec:dss_structured}

To achieve the high throughput required for modern GPU architectures, our approach decomposes the structured block
interior summation into a sequence of dimensionally-split passes along the principal axes. In a 3D triangulation, a
single degree of freedom at a cell corner is shared by $2^3 = 8$ adjacent elements. While a traditional matrix-free
implementation necessitates a complex gather-scatter operation to aggregate all eight contributions simultaneously, our
multi-pass strategy achieves the same result through local \emph{one-to-one} exchanges: a single pass along axis
$\alpha$ visits every interior interface normal to $\alpha$, sums the two coincident copies of each interface DoF, and
writes the sum back to both sides, so that after the pass both copies hold the assembled value of that face pair.

The key property is that vertex and edge DoFs are assembled \emph{as a byproduct} of these face passes, without ever
being addressed explicitly. Consider, in 2D, a vertex shared by four elements (Figure~\ref{fig:dss_propagation}). Each
element initially holds its own partial contribution. The $y$-pass sums the copies across the two horizontal
interfaces, after which each copy holds the sum of its vertical pair of elements. The subsequent $x$-pass exchanges
these \emph{already pairwise-summed} values across the vertical interfaces, so each of the four copies ends up holding
the total of all four contributions: the partial sums \emph{cascade} around the corner. The same argument applies
recursively in 3D: a vertex DoF shared by $2^3 = 8$ elements receives its full sum after the $z$, $y$, and $x$ passes,
and an edge DoF shared by four elements after the two passes perpendicular to it. Sequentiality of the passes is
essential. Each pass must observe the partial sums produced by the previous one, but only $d$ kernel launches are
required in total, independent of the polynomial degree and of the block size.

At runtime, we execute these passes using specialized kernels that permit fine-grained control over the memory
hierarchy. The kernel maps thread blocks to batches of adjacent element interfaces. Because the block layout is
uniform, extraction and injection are performed using statically unrolled loops and hard-coded face offsets, ensuring
coalesced memory accesses. Furthermore, by processing one axis at a time and ensuring each interface pair is
exclusively handled by exactly one thread, we safely update the boundary values without requiring hardware atomic
operations. This translates a latency-bound graph-traversal problem into a throughput-bound streaming operation. By the
end of this structured phase, all internal block degrees of freedom have reached their final assembled values, while
the block boundary degrees of freedom contain partial sums representing their local block's contribution.

\begin{figure}[htbp]
    \centering
\usetikzlibrary{arrows.meta, positioning, calc, shapes.geometric}
\scalebox{1.25}{%
    \begin{tikzpicture}[
        scale=0.95,
        every node/.style={font=\small},
        gridline/.style={gray!60, thin},
        ring/.style={draw=gray!75, thin, fill=white},
        exch/.style={{Stealth[length=2mm]}-{Stealth[length=2mm]}, thick, blue!60!black},
        mA/.style={circle, fill=red!80!black, inner sep=1.8pt},
        mB/.style={rectangle, fill=blue!70!black, inner sep=1.8pt},
        mC/.style={regular polygon, regular polygon sides=3, fill=green!55!black, inner sep=1.1pt},
        mD/.style={diamond, fill=orange!90!black, inner sep=1.4pt},
        ]

        \def\mo{0.085} 
        \newcommand{\setone}[2]{\node[#2] at #1 {};}
        \newcommand{\settwo}[3]{%
            \node[#2] at ($#1+(-\mo,0)$) {};
            \node[#3] at ($#1+(\mo,0)$) {};}
        \newcommand{\setfour}[1]{%
            \node[mA] at ($#1+(-\mo,\mo)$) {};
            \node[mB] at ($#1+(\mo,\mo)$) {};
            \node[mC] at ($#1+(-\mo,-\mo)$) {};
            \node[mD] at ($#1+(\mo,-\mo)$) {};}

        \newcommand{\lbls}[4]{%
            \node[font=\scriptsize] at ($(nw)+(-0.49,0.43)$) {$\substack{#1}$};
            \node[font=\scriptsize] at ($(ne)+(0.49,0.43)$)  {$\substack{#2}$};
            \node[font=\scriptsize] at ($(sw)+(-0.49,-0.43)$) {$\substack{#3}$};
            \node[font=\scriptsize] at ($(se)+(0.49,-0.43)$)  {$\substack{#4}$};}
        \newcommand{\lblsc}[4]{%
            \node[font=\scriptsize] at ($(nw)+(-0.27,0.27)$) {$\substack{#1}$};
            \node[font=\scriptsize] at ($(ne)+(0.27,0.27)$)  {$\substack{#2}$};
            \node[font=\scriptsize] at ($(sw)+(-0.27,-0.27)$) {$\substack{#3}$};
            \node[font=\scriptsize] at ($(se)+(0.27,-0.27)$)  {$\substack{#4}$};}

        \newcommand{\dsscells}[3]{%
            \pgfmathsetmacro\gx{#1/2}%
            \pgfmathsetmacro\gy{#2/2}%
            \begin{scope}[shift={(-\gx,\gy)}]   
                \draw[gridline] (0,1.5) rectangle (1.5,3); \coordinate (nw) at (1.15,1.85);
                \node[ring, circle, minimum size=3.3mm] at (nw) {};
            \end{scope}
            \begin{scope}[shift={(\gx,\gy)}]    
                \draw[gridline] (1.5,1.5) rectangle (3,3); \coordinate (ne) at (1.85,1.85);
                \node[ring, circle, minimum size=3.3mm] at (ne) {};
            \end{scope}
            \begin{scope}[shift={(-\gx,-\gy)}]  
                \draw[gridline] (0,0) rectangle (1.5,1.5); \coordinate (sw) at (1.15,1.15);
                \node[ring, circle, minimum size=3.3mm] at (sw) {};
            \end{scope}
            \begin{scope}[shift={(\gx,-\gy)}]   
                \draw[gridline] (1.5,0) rectangle (3,1.5); \coordinate (se) at (1.85,1.15);
                \node[ring, circle, minimum size=3.3mm] at (se) {};
            \end{scope}
            \node[align=center, font=\footnotesize] at (1.5,-0.7) {#3};}

        \begin{scope}[shift={(0,0)}]
            \dsscells{0.5}{0.5}{(a) initial}
            \setone{(nw)}{mA} \setone{(ne)}{mB} \setone{(sw)}{mC} \setone{(se)}{mD}
            \lblsc{a}{b}{c}{d}
            \foreach \y in {2.1,2.55,3.0} \draw[exch] (1.1,\y) -- (1.9,\y);
            \foreach \y in {0.9,0.45,0.0} \draw[exch] (1.1,\y) -- (1.9,\y);
        \end{scope}

        \node at (3.95,1.5) {$\xrightarrow{\;x\text{-pass}\;}$};

        \begin{scope}[shift={(4.9,0)}]
            \dsscells{0}{0.5}{(b) after $x$-pass}
            \settwo{(nw)}{mA}{mB} \settwo{(ne)}{mA}{mB}
            \settwo{(sw)}{mC}{mD} \settwo{(se)}{mC}{mD}
            \lbls{a{+}b}{a{+}b}{c{+}d}{c{+}d}
            \foreach \x in {0.4,0.8,1.15} \draw[exch] (\x,1.9) -- (\x,1.1);
            \foreach \x in {1.85,2.2,2.6} \draw[exch] (\x,1.9) -- (\x,1.1);
        \end{scope}

        \node at (8.85,1.5) {$\xrightarrow{\;y\text{-pass}\;}$};

        \begin{scope}[shift={(9.8,0)}]
            \dsscells{0}{0}{(c) after $y$-pass}
            \setfour{(nw)} \setfour{(ne)} \setfour{(sw)} \setfour{(se)}
            \lbls{a{+}b{+}\\c{+}d}{a{+}b{+}\\c{+}d}{a{+}b{+}\\c{+}d}{a{+}b{+}\\c{+}d}
        \end{scope}

    \end{tikzpicture}%
}
    \caption{Dimensionally-split DSS propagation in 2D, shown for a vertex DoF shared by four elements. (a) Before
    assembly each element stores its own partial contribution $a,b,c,d$. (b) The pass along the $x$-axis exchanges and
    sums the copies across faces normal to $x$, so each horizontal neighbour pair holds $a{+}b$ and $c{+}d$. (c) The
    pass along the $y$-axis exchanges the already pairwise-summed values, cascading the partial sums around the corner
    so that every copy holds the total $a{+}b{+}c{+}d$ of all four contributions. The vertex is assembled without
    ever being addressed explicitly.}
    \label{fig:dss_propagation}
\end{figure}

\begin{algorithm}
    \caption{Dimensionally-Split Structured Summation}
    \label{alg:struct_dss}
    \begin{algorithmic}[1]
        \Require Cell-wise residual vector $u \in \mathbb{V}_{\text{cell}}^*$.
        \Ensure Vector $\bar{v}$ with updated internal structured block faces.
        \State $\bar{v} \gets u$
        \For{axis $\alpha \in \{1, 2, 3\}$ (corresponding to $z, y, x$)} \Comment{Exactly three sequential runs in 3D}
        \State \textbf{Parallel Kernel Launch} over all structured internal interfaces $\mathcal{I}_\alpha$ normal to $\alpha$:
        \For{\textbf{each} interface pair $(K_L, K_R) \in \mathcal{I}_\alpha$ \textbf{in parallel}}
        \State Extract values: $v_L \gets \bar{v}|_{K_L, \text{face}_\alpha=p}, \quad v_R \gets \bar{v}|_{K_R, \text{face}_\alpha=0}$
        \State \text{Sum interface contributions:} $s \gets v_L + v_R$
        \State \text{Coalesced Store:} $\bar{v}|_{K_L, \text{face}_\alpha=p} \gets s, \quad \bar{v}|_{K_R, \text{face}_\alpha=0} \gets s$
        \EndFor
        \EndFor
        \State \Return $\bar{v}$
    \end{algorithmic}
\end{algorithm}

\subsection{Generalization to Block-Structured Grids}
\label{sec:dss_unstructured}

The dimensionally-split cascade of Section~\ref{sec:dss_structured} relies on every interior vertex having the regular
$2^d$-element neighborhood of a tensor-product grid, so that the axis passes enumerate exactly the right exchange
partners. At the interfaces \emph{between} macro-blocks this regularity is lost: a macro-vertex of an unstructured base
grid may be shared by any number of blocks (e.g.\ three or five in 2D, Figure~\ref{fig:dss_unstructured}), and no
sequence of axis-aligned pairwise exchanges visits all of its copies. These interfaces must therefore be resolved by
dedicated kernels that traverse the macro-topology explicitly. Their cost is minor, as the affected DoFs form a
lower-dimensional skeleton of the mesh, but their correct and divergence-free execution requires care.

To prevent race conditions and branch divergence on the GPU, the unstructured interfaces are partitioned by codimension
into three mutually disjoint sets, each processed by its own kernel launch: \emph{Faces} (macro-block quads, or in 2D
the codimension-one segments, with their bounding lines and corners masked out), \emph{Lines} (macro-block edges with
their endpoints masked out, in 3D only), and \emph{Vertices} (macro-block corners). The exclusive masking, where each
kernel touches only the strict interior of its entity, iterating the open index range $1,\dots,p-1$ along the entity
and leaving the bounding sub-entities to the lower-dimensional passes, guarantees that the three launches write to
non-overlapping DoFs, so they may run in any order without atomics or inter-launch synchronization. This inter-block
phase operates on the macro-interface skeleton and is disjoint from the structured intra-block phase; it runs after the
structured sweeps, consuming the partial block-boundary sums they produce and completing them into the globally
assembled values.

Faces differ from the lower-dimensional entities in valence: a codimension-one interface is shared by exactly two
blocks, so the Face kernel is a fixed pairwise exchange (cells $a$ and $b$) rather than a variable-length gather. The
only complication is orientation: unstructured mesh generators can connect the two blocks with arbitrary relative
rotations or reflections, so before summing, side $b$ must be re-indexed into the local frame of side $a$. This is
resolved on-the-fly with pure integer arithmetic, with no connectivity lookup: the relative orientation is packed into
a small bitfield (in 3D three logical flags indicating an axis swap and flips along the first or second local
coordinate) from which the matching index on side $b$ is computed algebraically (Figure~\ref{fig:dss_unstructured}).
The kernel then loads both copies, sums them, and stores the result back to each side under its own indexing.

Lines and Vertices, by contrast, carry a runtime valence: a macro-edge or macro-corner of an irregular base grid may be
shared by any number of blocks. Their kernels gather the sharing set into a thread register, accumulate, and scatter
the completed sum back to every copy; these are the only kernels in the entire DSS pipeline that consult a sharing list
rather than a fixed-arity neighbor map. The sharing list is padded to a fixed width so the launch stays branch-free,
and (like the Faces) each copy is re-indexed into a common canonical frame before accumulation. A one-dimensional
reversal flag for lines, no orientation for vertices.

A subtlety distinguishes these entities from the structured and Face passes and is in fact the crux of the
block-structured generalization. A macro-corner (or, in 3D, a macro-edge) is in general shared by \emph{several cells
    of the same block}, and the structured pass has already summed those into a common per-block value that it replicates
on every within-block copy. Adding all copies again across blocks would therefore multiply the result by the
within-block multiplicity. The gather thus sums only \emph{one representative copy per block} and scatters the total
back to all copies, representatives and non-representatives alike. Concretely, take the midpoint $X$ of a shared
macro-edge in Figure~\ref{fig:dss_unstructured}: it is the corner at which two sub-cell faces meet, hence a
\emph{Vertex} entity, and in 2D it is shared by four cells (two in each of the two adjacent blocks). Initialize every
copy to $1$. The structured pass sums the two within-block copies on each side, leaving $2$ on all four. A naive gather
over all four copies would return $2+2+2+2 = 8$; instead the Vertex kernel adds one representative per block, $2 + 2 =
    4$, and writes $4$ to every copy, which is the correctly assembled value. The number of contributions to such an entity
is therefore its number of sharing \emph{blocks}, not its number of cell copies: two for the mid-edge node (a pairwise
exchange), and three or five for the central macro-corner of Figure~\ref{fig:dss_unstructured}. The same reasoning
applies verbatim to the 3D Line kernel, where a macro-edge interior is likewise reached by several cells of each block;
the representative selection carries over unchanged.

\begin{figure}[htbp]
    \centering
\usetikzlibrary{arrows.meta, calc}

\def\inset{0.10}
\def\gap{0.6}
\def\fgap{0.12}   
\def\dotrad{3pt}  
\def\midgap{0.07cm}  
\newlength{\arrowthick}\setlength{\arrowthick}{8pt}   
\newlength{\arrowtip}\setlength{\arrowtip}{10pt}       

\newcommand{\facelink}[3]{%
\coordinate (FEa) at ($($(#1)!{\inset}!(0,0)$)+(#2:\gap)$);
\coordinate (FEb) at ($($(#1)!{\inset}!(0,0)$)+(#3:\gap)$);
\pgfmathsetmacro{\mf}{(1+\inset)/2}
\coordinate (FLa) at ($($(#1)!{\mf}!(0,0)$)+(#2:\gap)$);
\coordinate (FLb) at ($($(#1)!{\mf}!(0,0)$)+(#3:\gap)$);
\draw[facelink] ($(FLa)!0.5!(FEa)$) -- ($(FLb)!0.5!(FEb)$);
\coordinate (FOoa) at (#2:\gap);
\coordinate (FOob) at (#3:\gap);
\coordinate (FGa) at ($(FOoa)!{\fgap}!(FEa)$);
\coordinate (FGb) at ($(FOob)!{\fgap}!(FEb)$);
\draw[facelink] ($(FGa)!0.5!(FLa)$) -- ($(FGb)!0.5!(FLb)$);
\draw[cornerlink] (FLa) -- (FLb);
\fill[blue!60!black] (FLa) circle (\dotrad);
\fill[blue!60!black] (FLb) circle (\dotrad);}

\newcommand{\ucell}[4]{%
        \coordinate (Oo) at (#1:\gap);
        \coordinate (Pa) at ($(#2)+(#1:\gap)$);
        \coordinate (Pb) at ($(#3)+(#1:\gap)$);
        \coordinate (Pc) at ($(#4)+(#1:\gap)$);
        \coordinate (Va) at ($(Pa)!\inset!(Oo)$);
        \coordinate (Vb) at ($(Pb)!\inset!(Oo)$);
        \coordinate (Vc) at ($(Pc)!\inset!(Oo)$);
        \coordinate (Moa) at ($(Oo)!0.5!(Va)$);
        \coordinate (Mab) at ($(Va)!0.5!(Vb)$);
        \coordinate (Mbc) at ($(Vb)!0.5!(Vc)$);
        \coordinate (Mco) at ($(Vc)!0.5!(Oo)$);
        \draw[cell] (Oo) -- (Va) -- (Vb) -- (Vc) -- cycle;
        \draw[subdiv] (Moa) -- (Mbc);
        \draw[subdiv] (Mab) -- (Mco);
        \coordinate (FSa) at ($(Oo)!{\fgap}!(Va)$);
        \coordinate (FSc) at ($(Oo)!{\fgap}!(Vc)$);
        \draw[face] (FSa) -- ($(Moa)!\midgap!(FSa)$);
        \draw[face] ($(Moa)!\midgap!(Va)$) -- (Va);
        \draw[face] (FSc) -- ($(Mco)!\midgap!(FSc)$);
        \draw[face] ($(Mco)!\midgap!(Vc)$) -- (Vc);
        \fill[blue!60!black] (Oo) circle (\dotrad);}

\def\R{2.0}

\begin{tikzpicture}[
        scale=1.0,
        cell/.style={draw=black, thin, fill=gray!8},
        face/.style={blue!60!black, line width=2.2pt},
        subdiv/.style={black!40, line width=0.4pt},
        facelink/.style={gray!70, double=gray!70, double distance=\the\arrowthick,
        line width=0.7pt,
        {Triangle[width=\the\arrowtip,length=6pt]}-{Triangle[width=\the\arrowtip,length=6pt]}},
        cornerlink/.style={black, <->, line width=0.8pt,
                        shorten >=\dotrad, shorten <=\dotrad},
        ]
        \coordinate (O)  at (0,0);
        \coordinate (H0) at (0:\R);
        \coordinate (H1) at (60:\R);
        \coordinate (H2) at (120:\R);
        \coordinate (H3) at (180:\R);
        \coordinate (H4) at (240:\R);
        \coordinate (H5) at (300:\R);

        \ucell{60}{H0}{H1}{H2}
        \ucell{180}{H2}{H3}{H4}
        \ucell{300}{H4}{H5}{H0}

        \facelink{H0}{60}{300}
        \facelink{H2}{60}{180}
        \facelink{H4}{180}{300}

        \draw[cornerlink] (60:\gap) -- (0,0);
        \draw[cornerlink] (180:\gap) -- (0,0);
        \draw[cornerlink] (300:\gap) -- (0,0);

\end{tikzpicture}
\hspace{2.2cm}
\begin{tikzpicture}[
        scale=1.0,
        cell/.style={draw=black, thin, fill=gray!8},
        face/.style={blue!60!black, line width=2.2pt},
        subdiv/.style={black!40, line width=0.4pt},
        facelink/.style={gray!70, double=gray!70, double distance=\the\arrowthick,
        line width=0.7pt,
        {Triangle[width=\the\arrowtip,length=6pt]}-{Triangle[width=\the\arrowtip,length=6pt]}},
        cornerlink/.style={black, <->, line width=0.8pt,
                        shorten >=\dotrad, shorten <=\dotrad},
        ]
        \def\Rf{1.45}   
        \def\Rt{2.45}   
        \coordinate (O)  at (0,0);
        \coordinate (H0) at (0:\Rf);
        \coordinate (H1) at (36:\Rt);
        \coordinate (H2) at (72:\Rf);
        \coordinate (H3) at (108:\Rt);
        \coordinate (H4) at (144:\Rf);
        \coordinate (H5) at (180:\Rt);
        \coordinate (H6) at (216:\Rf);
        \coordinate (H7) at (252:\Rt);
        \coordinate (H8) at (288:\Rf);
        \coordinate (H9) at (324:\Rt);

        \ucell{36}{H0}{H1}{H2}
        \ucell{108}{H2}{H3}{H4}
        \ucell{180}{H4}{H5}{H6}
        \ucell{252}{H6}{H7}{H8}
        \ucell{324}{H8}{H9}{H0}

        \facelink{H0}{36}{324}
        \facelink{H2}{36}{108}
        \facelink{H4}{108}{180}
        \facelink{H6}{180}{252}
        \facelink{H8}{252}{324}

        \draw[cornerlink] (36:\gap) -- (0,0);
        \draw[cornerlink] (108:\gap) -- (0,0);
        \draw[cornerlink] (180:\gap) -- (0,0);
        \draw[cornerlink] (252:\gap) -- (0,0);
        \draw[cornerlink] (324:\gap) -- (0,0);

\end{tikzpicture}
    \caption{Unstructured DSS exchange at macro-vertices of irregular valence in 2D, here a three-block (left) and a five-block (right) configuration. Blue dots mark the shared macro-vertex copies and
        blue dashed segments the macro-block edges meeting at it; in 2D these codimension-one edges are the
        \emph{Face} entities, while Line entities appear only in 3D. Black arrows depict the
        \emph{Vertex}-kernel exchange: no sequence of axis-aligned pairwise exchanges visits all copies of such a vertex,
        so a dedicated kernel gathers the contributions of all sharing macro-blocks into a register, accumulates them,
        and scatters the completed sum back to every participant. The mid-edge nodes at the centers of the macro-block
        edges are also \emph{Vertex} entities: each is shared by two cells \emph{within} every adjacent block, but the
        structured pass has already summed those within-block copies to a common value, so only \emph{one representative
            copy per block} enters the inter-block sum. This is why each such node is drawn with a single dot per block and a
        single pairwise arrow (two blocks, two representatives) rather than four converging spokes; in general the
        number of arrows meeting at an entity equals its number of sharing blocks (here three and five at the central
        macro-corner). Gray arrows along the macro-block edges indicate the
        \emph{Face}-entity exchanges; their differing orientations reflect the per-face canonicalization
        (extract$\rightarrow$permute$\rightarrow$accumulate$\rightarrow$inverse-permute) that reconciles the local orientations of neighboring blocks,
        computed from their vertex ordering.}
    \label{fig:dss_unstructured}
\end{figure}

\begin{algorithm}
    \caption{Unstructured Inter-Block Summation}
    \label{alg:unstruct_dss}
    \begin{algorithmic}[1]
        \Statex \textbf{Require:} Vector $\bar{v}$ from structured summation; within-block copies already share a value.
        \Statex \hphantom{\textbf{Require:} }Disjoint sets $\mathcal{F}$ (faces, pairwise), $\mathcal{L}$ (lines, 3D), $\mathcal{V}$ (vertices).
        \Statex \hphantom{\textbf{Require:} }Each line/vertex entity flags one cell per sharing block as its representative.
        \Statex \textbf{Ensure:} Vector $\bar{v}$ with globally assembled macro-block interfaces.

        \For{\textbf{each} macro-face $(K_a, K_b, o) \in \mathcal{F}$ \textbf{in parallel}} \Comment{two blocks per face}
        \State $v_a \gets \bar{v}|_{K_a,\text{face}}$, \quad $v_b \gets \bar{v}|_{K_b,\text{face}}$ \Comment{face interiors}
        \State $s \gets v_a + \textsc{Reindex}(v_b, o)$ \Comment{$o$: orientation, map $b\!\to\!a$}
        \State $\bar{v}|_{K_a,\text{face}} \gets s$, \quad $\bar{v}|_{K_b,\text{face}} \gets \textsc{Reindex}^{-1}(s, o)$
        \EndFor

        \For{\textbf{each} macro-line $L \in \mathcal{L}$ \textbf{in parallel}} \Comment{$r_K$: reversal flag}
        \State $s \gets 0$
        \For{\textbf{each} representative cell $K \in L$ (one per block)}
        \State $s \gets s + \textsc{Permute}_{\text{1D}}(\bar{v}|_{K,\text{line}},\, r_K)$
        \EndFor
        \For{\textbf{each} sharing cell $K \in L$ (all copies)}
        \State $\bar{v}|_{K,\text{line}} \gets \textsc{InversePermute}_{\text{1D}}(s,\, r_K)$
        \EndFor
        \EndFor

        \For{\textbf{each} macro-vertex $V \in \mathcal{V}$ \textbf{in parallel}}
        \State $s \gets 0$
        \For{\textbf{each} representative cell $K \in V$ (one per block)}
        \State $s \gets s + \bar{v}|_{K,\text{vertex}}$
        \EndFor
        \For{\textbf{each} sharing cell $K \in V$ (all copies)}
        \State $\bar{v}|_{K,\text{vertex}} \gets s$
        \EndFor
        \EndFor

        \State \Return $\bar{v}$
    \end{algorithmic}
\end{algorithm}

\section{Implementation}
\label{sec:implementation}


\subsection{Memory layout: the BXYZE format}
\label{sec:layout}

The field data reside in a multi-dimensional array with layout $[B,\, n_x,\, n_y,\, n_z,\, n_e]$ (which we refer to as
the \emph{BXYZE} layout), where $B$ is a fixed block size, $(n_x, n_y, n_z)$ are the local DoF indices within an
element, and $n_e$ enumerates the elements of a block. Because the cell-wise storage decouples the layout from the
global mesh topology, this ordering can be chosen freely to suit the hardware; the choice governs a trade-off between
memory coalescing and element-contiguity, best seen at the two extremes. In an element-major layout (EXYZ), the element
index is the fastest stride: threads operating on the same local DoF across consecutive elements read consecutive
words: coalesced. However, operations that touch only a proper subset of an element's DoFs (e.g.\ the DoFs of a single
face during DSS or face restriction) find these fragments interleaved across distant elements, fragmenting the memory
request pattern. Conversely, in a node-major layout (XYZE) all DoFs of one element are contiguous, so element-local
sub-accesses are dense; but threads working on distinct elements stride by the full per-element footprint $(p+1)^d$,
and coalescing across elements is lost.

The tension is thus between two classes of kernels. For volumetric operator evaluation, where every thread processes
whole elements, EXYZ is the optimal layout: accesses are coalesced and nothing else matters. But the same layout is
counterproductive for the DSS kernels of Section~\ref{sec:dss}, which read and write face, edge, and vertex fragments:
in EXYZ these fragments are scattered with element-sized strides, and the surface kernels' throughput collapses. The
blocked BXYZE layout is the compromise: grouping $B$ spatially adjacent elements into one memory tile keeps accesses
along the fastest axis coalesced for the volumetric kernels, while the per-element data remain compact enough that
interface fragments span few cache lines. (Internal padding where fewer than $B$ valid elements exist is a negligible
concern: with $B \le 128$, at most $B$ wasted cells per block boundary are irrelevant at realistic problem sizes.) The
layout also maps naturally onto sum factorisation: the 1D contractions operate independently along the $B$ and $n_e$
axes and translate directly into vectorised multi-dimensional array operations. We back these claims up empirically in
Section~\ref{sec:results_layout}, where the volumetric and DSS kernels are benchmarked across the layout design space.

\subsection{Kernel implementation in Triton}
\label{sec:triton_kernels}

Our entire solver stack is implemented in \emph{Triton}~\cite{triton2019}, a domain-specific language and compiler for
GPU kernels embedded in Python. For readers from the finite element community unfamiliar with it: Triton occupies a
middle ground between high-level array frameworks (such as JAX/XLA~\cite{jax2018github}) and hand-written CUDA. A
Triton kernel is written as a Python function operating on small multi-dimensional \emph{tiles} (blocks of a tensor)
rather than on individual threads; the programmer specifies \emph{what} each tile-level program instance loads,
computes, and stores, while the compiler decides the thread-level details: register allocation, vectorization of loads,
shared-memory staging, and the mapping of tile operations onto the SIMT hardware. Notably, Triton bypasses the CUDA
toolchain: kernels are compiled through its own MLIR-based pipeline directly to PTX (or, on AMD hardware, to the
corresponding AMDGPU ISA), so no \texttt{.cu} source is ever generated, and the same kernel source runs on both NVIDIA
and AMD GPUs.

This tile-based abstraction matches our cell-wise storage format exactly: a memory tile of the BXYZE layout is a
natural Triton block, and the sum-factorized 1D contractions become tile-level tensor operations. The result is that
kernels which would require hundreds of lines of carefully tuned CUDA are expressed in a few dozen lines of Python,
while remaining competitive with vendor libraries in achieved bandwidth. Because Triton operates on PyTorch tensors,
every kernel can be verified directly against a reference implementation written in high-level, abstract linear algebra
on the same data, which is an invaluable property when developing the element kernels, where reproducing the
sum-factorized tensor contractions by hand is highly error-prone, especially for the intricate tile decompositions of
non-power-of-two polynomial degrees ($p = 4, 5$). We also note that a distributed, multi-GPU extension of the
programming model exists~\cite{zheng2025triton}, offering a natural path to scaling out the present single-device
implementation.

Two implementation aspects deserve explicit mention. First, \emph{shared-memory bank conflicts}: when a tile is staged
in on-chip shared memory, which is divided into 32 banks, threads that simultaneously access different addresses in the
same bank are serialized. Tensor-product index patterns are particularly prone to this, since the stride between
consecutively accessed DoFs is often a multiple of the bank count. Triton's compiler mitigates these conflicts
automatically by \emph{swizzling} (applying a bank-aware permutation to the shared-memory layout of staged
tiles~\cite{triton2019,zhou2026linear}), so the contractions proceed without swizzling tricks customary in hand-written
CUDA kernels (see for example~\cite{cui2025implementation}).

Second, \emph{power-of-two tile shapes}: Triton requires every tile dimension to be a power of two. The per-element DoF
count $(p+1)$ meets this only for $p \in \{1, 3, 7\}$. For $p = 2, 4, 5$, padding a single tile up to the next power of
two would waste a substantial fraction of memory and bandwidth, so our data layout instead decomposes the DoF range of
such elements into several tensors of power-of-two extent (e.g.\ $5 = 4 + 1$), and each kernel processes the resulting
sub-tiles; this complicates the kernels but keeps the stored data dense. For $p = 6$ we make the opposite choice and
simply pad to $8^3$: the utilization $(7/8)^3 \approx 0.67$ is still reasonable, and avoiding the multi-tensor
decomposition removes its structural overhead. All supported degrees $p = 1, \dots, 7$ are thus covered by one of the
two strategies. This padding is the source of the gap between the \emph{theory} and \emph{achieved} TFLOP/s rates for
$p \ge 3$ (cf.\ Table~\ref{tab:perf_teaser_laplace}): the hardware issues operations on the padded tiles, while the
theoretical rate counts only the useful contractions.

\begin{table}
    \centering
\begin{tabular}{l c c c c c c c}
    \toprule
                       & $p=1$ & $p=2$ & $p=3$ & $p=4$ & $p=5$ & $p=6$ & $p=7$ \\
    \midrule
    Warps              & 4     & 4     & 1     & 1     & 1     & 1     & 1     \\
    Block exec ($B_e$) & 128   & 128   & 8     & 4     & 2     & 4     & 4     \\
    Block data ($B$)   & 128   & 128   & 32    & 32    & 32    & 32    & 32    \\
    \bottomrule
\end{tabular}

    \caption{Best kernel configuration per polynomial degree. \emph{Block exec} ($B_e$) is the number of elements processed
        per kernel launch tile; \emph{block data} ($B$) is the storage block size of the $[B,\,n_x,\,n_y,\,n_z,\,n_e]$
        layout.}
    \label{tab:best_config}
\end{table}

Finally, the dominant arithmetic primitive (the 1D contraction of sum factorization) is expressed through Triton's
\texttt{dot} primitive, which the compiler lowers onto the GPU's matrix units (tensor cores). The contractions of all
elements in a tile share the same small 1D matrix ($V$, or its transpose), so they batch naturally into the
matrix-matrix shapes these units consume. The matrix units double the FP64 peak relative to the vector pipeline; it
additionally frees the vector pipeline for address arithmetic and keeps the contraction off the critical path. At low
polynomial degrees the 1D matrices are too small to fill the tensor-core tile shapes that Triton supported out of the
box. I therefore implemented support for the small-tile MMAv2 instruction shape ($8 \times 8 \times 4$) in the Triton
compiler, a contribution that has since been merged
upstream.\footnote{\url{https://github.com/triton-lang/triton/pull/10060}} On the benchmarked matrix-matrix
multiplication shapes, Triton for some sizes slightly outperforms cuBLAS, which we attribute to the compiler's software
pipelining of the main loop, overlapping global-memory staging with tensor-core computation.

It is worth stressing, for readers from the finite element community, what the implementation does \emph{not} contain.
There is no global degree-of-freedom numbering and no element-to-global index map anywhere in the solver: every kernel
reaches its operands through statically known offsets into the $[B,\, n_x,\, n_y,\, n_z,\, n_e]$ tiles. The only
connectivity data that survive are the macro-block adjacency lists consulted by the unstructured DSS vertex- and
line-kernels of Section~\ref{sec:dss_unstructured}, and these reference merely the lower-dimensional skeleton of the
mesh. The gather-scatter index arrays that consume both storage and bandwidth in classical matrix-free frameworks thus
have no counterpart in our data structures.


\section{Numerical Results}
\label{sec:results}

Throughout this section we report throughput in giga-degrees-of-freedom per second (GDoF/s), counting each degree of
freedom once. This normalizes the cost to the size of the problem actually being solved rather than to an
implementation-dependent count of local element nodes, and it makes rates directly comparable across polynomial degrees
and across operators. The unique-DoF count itself is obtained in one of two ways. On structured Cartesian grids it is
known in closed form, $N_\text{unique} = (n p + 1)^d$ for $n$ cells per direction, and the reported rates are exact. On
general (unstructured or $h$-adaptively refined) meshes we instead measure the cell-wise rate (total stored DoFs
divided by apply time) and rescale it to unique DoFs by the large-mesh factor $\big(p/(p+1)\big)^d$. This factor is the
asymptotic interior ratio and ignores the surface DoFs of the domain boundary, which are shared by fewer elements; it
therefore slightly \emph{over}counts the redundancy and \emph{under}counts the unique DoFs, so the resulting unique
rates are a conservative lower bound on the true throughput.

One further caveat applies specifically to the DSS rates: for $p > 1$ only the interface DoFs shared between elements
take part in the direct stiffness summation (the interior DoFs of each element are never exchanged), so the
per-unique-DoF DSS throughput is computed against the full DoF count even though a shrinking fraction of those DoFs is
actually touched as $p$ grows. This is deliberate: it keeps the DSS rate on the same per-DoF footing as the volumetric
rates, so the two can be read on a common axis, and it is the reason the apparent DSS throughput \emph{rises} with $p$
even though the work per interface DoF does not.

\subsection{Layout study: the volumetric--DSS trade-off}
\label{sec:results_layout}

We begin by substantiating the central layout claim of Section~\ref{sec:layout}: that the block size $B$ of the $[B,\,
            n_x,\, n_y,\, n_z,\, n_e]$ layout governs an irreconcilable tension between volumetric operator evaluation and the
surface DSS kernels, and that an intermediate $B$ is the right compromise. The measurements are performed on a
Cartesian grid obtained by uniform hierarchical refinement of a single coarse cell ($16.7$ million active elements,
$\approx 1.07$ billion active DoFs at $p=3$); on this structured geometry every macro-interface is conforming and
regular, so the reported DSS figures exercise the structured-path kernels of Section~\ref{sec:dss_structured} alone,
isolating them from the unstructured and hanging-node machinery. For each polynomial degree we sweep $B$ over powers of
two and, at every $B$, retain the launch configuration that maximizes volumetric throughput, reporting the
corresponding mass-operator volumetric and DSS rates per unique DoF.

\begin{figure}
    \centering
    \begin{subfigure}[b]{0.49\linewidth}
        \centering
\begin{tikzpicture}
    \begin{loglogaxis}[
            log basis x=2,
            log basis y=2,
            width=\linewidth,
            height=0.85\linewidth,
            xlabel={Block size $B$},
            ylabel={ [GDoF (unique)/s]},
            xmin=0.8, xmax=160,
            ymin=1, ymax=64,
            xtick={1,2,4,8,16,32,64,128},
            xticklabels={1,2,4,8,16,32,64,128},
            ytick={1,2,4,8,16,32,64},
            yticklabels={1,2,4,8,16,32,64},
            grid=both,
            legend style={font=\scriptsize, at={(0.97,0.03)}, anchor=south east},
            legend columns=2,
            legend cell align=left,
        ]
        \addplot[orange, mark=triangle*] table[col sep=comma, x index=0, y index=6] {raw_results/sweep_p3.csv};
        \addplot[teal,   mark=*]         table[col sep=comma, x index=0, y index=6] {raw_results/sweep_p4.csv};
        \addplot[blue,   mark=square*]   table[col sep=comma, x index=0, y index=6] {raw_results/sweep_p5.csv};
        \addplot[orange, mark=triangle*, dashed] table[col sep=comma, x index=0, y index=7] {raw_results/sweep_p3.csv};
        \addplot[teal,   mark=*,         dashed] table[col sep=comma, x index=0, y index=7] {raw_results/sweep_p4.csv};
        \addplot[blue,   mark=square*,   dashed] table[col sep=comma, x index=0, y index=7] {raw_results/sweep_p5.csv};
        \legend{$p{=}3$, $p{=}4$, $p{=}5$}
    \end{loglogaxis}
\end{tikzpicture}
        \caption{Block-size sweep (structured path). Solid lines: volumetric throughput; dashed lines: DSS throughput.}
        \label{fig:dss_layout_sweep}
    \end{subfigure}
    \hfill
    \begin{subfigure}[b]{0.49\linewidth}
        \centering
\begin{tikzpicture}
    \begin{loglogaxis}[
            width=\linewidth,
            height=0.85\linewidth,
            xlabel={Arithmetic intensity [FLOP/byte]},
            ylabel={Performance [TFLOP/s]},
            log basis x=2,
            xmin=0.5, xmax=16,
            xtick={0.5,1,2,4,8,16},
            xticklabels={$\frac{1}{2}$,1,2,4,8,16},
            log basis y=2,
            ymin=1, ymax=32,
            ytick={1,2,4,8,16,32},
            yticklabels={1,2,4,8,16,32},
            grid=both,
            legend pos=south east,
            legend cell align=left,
        ]
        \addplot[green, fill=green, draw=none, fill opacity=0.3, forget plot]
        coordinates {(0.5,1) (4.76,9.7) (16,9.7) (16,1) (0.5,1)};
        \addplot[cyan, fill=cyan, draw=none, fill opacity=0.3, forget plot]
        coordinates {(4.76,9.7) (9.56,19.5) (16,19.5) (16,9.7) (4.76,9.7)};

        \addplot[thick, domain=0.5:9.56, samples=2, forget plot] {2.039*x}
        node[pos=0.35, sloped, above, font=\small\bfseries] {Bandwidth 2039 GB/s};
        \addplot[thick, domain=0.5:16, samples=2, forget plot] {9.7}
        node[pos=0.0, above right, font=\small\bfseries] {CUDA 9.7 TFLOP/s};
        \addplot[thick, dashed, domain=0.5:16, samples=2, forget plot] {19.5}
        node[pos=0.0, above right, font=\small\bfseries] {Tensor 19.5 TFLOP/s};

        \addplot[black, mark=*, mark size=2pt]
        table[col sep=comma, x=ai, y=tflops] {raw_results/laplace_roofline.csv};
        \addlegendentry{Laplace}
        \addplot[red, mark=square, mark options={fill=none}, mark size=2pt]
        table[col sep=comma, x=ai, y=tflops] {raw_results/mass_roofline.csv};
        \addlegendentry{Mass}
    \end{loglogaxis}
\end{tikzpicture}
        \caption{Roofline, best configuration.}
        \label{fig:roofline}
    \end{subfigure}
    \caption{(a) Volumetric (solid) and DSS (dashed) throughput of the mass operator in cell-wise storage as a function
        of the block size $B$, for polynomial degrees $p = 3, 4, 5$, measured on a Cartesian grid of $\approx 1.07$
        billion active DoFs (A100 80\,GB SXM). Rates are per unique DoF; at each $B$ the configuration maximizing
        volumetric throughput is shown. Volumetric throughput is nearly flat in $B$ while DSS throughput rises steeply
        and saturates past $B \approx 32$, so an intermediate $B = 16$--$32$ is the compromise. (b) Roofline of the
        mass operator~\eqref{eq:mass_operator} and the Laplace operator~\eqref{eq:laplace_operator} in the best
        configuration; solid lines are
        the memory-bandwidth slope (2\,039\,GB/s) and the vector FP64 ceiling (9.7\,TFLOP/s), the dashed line the
        tensor-core FP64 ceiling (19.5\,TFLOP/s).}
    \label{fig:dss_layout}
\end{figure}

\begin{figure}
    \centering
%
\pgfplotsset{
    bp1unique/.style={
            log basis x=10,
            width=\linewidth, height=0.85\linewidth, xlabel={Unique DoFs}, ylabel={GDoF (unique) /s}, xmin=20, xmax=1e9, ymin=0,
            ymax=45, grid=both, legend style={font=\scriptsize, at={(0.03,0.97)}, anchor=north west}, legend columns=2, legend cell
            align=left, }, } \newcommand{\bpuniqueplot}[1]{%
    \begin{tikzpicture}
        \begin{semilogxaxis}[bp1unique]
            \addplot[red,    mark=*]         table[col sep=comma, x index=0,  y index=1]  {#1};
            \addplot[orange, mark=square*]   table[col sep=comma, x index=2,  y index=3]  {#1};
            \addplot[teal,   mark=triangle*] table[col sep=comma, x index=4,  y index=5]  {#1};
            \addplot[blue,   mark=diamond*]  table[col sep=comma, x index=6,  y index=7]  {#1};
            \addplot[violet, mark=pentagon*] table[col sep=comma, x index=8,  y index=9]  {#1};
            \addplot[brown,  mark=otimes*]   table[col sep=comma, x index=10, y index=11] {#1};
            \addplot[black,  mark=x]         table[col sep=comma, x index=12, y index=13] {#1};
            \legend{$p{=}1$, $p{=}2$, $p{=}3$, $p{=}4$, $p{=}5$, $p{=}6$, $p{=}7$}
        \end{semilogxaxis}
    \end{tikzpicture}%
}
\begin{subfigure}[b]{0.49\linewidth}
    \centering
    \bpuniqueplot{raw_results/BP1_cartesian_plot.csv}
    \caption{Cartesian.}
    \label{fig:bp1_unique_cartesian}
\end{subfigure}
\hfill
\begin{subfigure}[b]{0.49\linewidth}
    \centering
    \bpuniqueplot{raw_results/BP1_mappingQ_plot.csv}
    \caption{ General geometry.}
    \label{fig:bp1_unique_mappingq}
\end{subfigure}
    \caption{BP1 mass-operator throughput per unique DoF as a function of the number of unique DoFs, for polynomial
        degrees $p = 1,\dots,7$ on a structured Cartesian grid (A100 80\,GB SXM, \texttt{fp64}). Each marker corresponds to a
        global refinement level. (a) Cartesian mapping; (b) General mapping. Throughput per unique
        DoF saturates once the problem fills the device, the achievable rate decreasing with $p$ as the cell-wise
        duplication factor $\big(n(p{+}1)/(np{+}1)\big)^3$ grows.}
    \label{fig:bp1_unique}
\end{figure}

Figure~\ref{fig:dss_layout}(a) lays the trade-off bare. The solid curves (volumetric throughput) are essentially flat
in $B$: whole-element streaming is already coalesced once $B \ge 2$, so enlarging the block buys nothing and, through
padding and slightly worse cache behaviour, costs a few percent at the largest sizes. The dashed curves (DSS
throughput) behave in the opposite way: at $B=1$ each structured face exchange straddles a block boundary and the
kernel is starved, but it climbs steeply and monotonically with $B$ as more interfaces become intra-block and the axis
sweeps run on dense, coalesced tiles, saturating only past $B \approx 32$--$64$, an order-of-magnitude gain from the
smallest to the largest block. The two curves cross around $B \approx 16$--$32$: below it the surface kernels dominate
the cost, above it they are essentially free while the volumetric rate has begun to erode. A block size of $B =
    16$--$32$ therefore captures almost all of the DSS speed-up at a negligible volumetric penalty, which is precisely the
compromise adopted throughout the remainder of this work. Notably, the DSS rate \emph{improves} with polynomial degree
(the surface-to-volume ratio of the exchanged DoFs falls as $p$ grows) so the trade-off only relaxes for the
higher-order elements that are the main target of the method.

\subsection{DSS throughput dependence on mesh structure}
\label{sec:results_dss_struct_unstruct}

We now isolate the cost of the direct stiffness summation itself and ask how much the structured (dimension-split) path
of Section~\ref{sec:dss_structured} gains over the general unstructured path. Both are exercised on the same geometry
(a unit cube under uniform refinement) so that the meshes are identical and only the code path differs: the structured
path exploits the tensor-product regularity of the interfaces, whereas the unstructured path treats every shared DoF
through the general indirection machinery. DSS carries essentially no arithmetic; each shared DoF is gathered, summed,
and scattered. Its arithmetic intensity is therefore negligible and the kernel is memory bound, so we report only the
achieved DRAM traffic and, alongside the GDoF/s rate, its speed-of-light (SoL): the fraction of peak memory bandwidth
attained. Table~\ref{tab:dss_structured_vs_unstructured} collects both metrics for $p = 1, \dots, 7$.

Two trends stand out. First, the structured path sustains a high and roughly constant bandwidth utilization (around
$80\%$ of peak across all degrees), while the unstructured path plateaus markedly lower, near $60\%$, the gap
reflecting the scattered, less coalesced memory accesses of the general indirection. Second, the throughput advantage
of the structured path \emph{widens} with polynomial degree: at $p = 1$ the two paths deliver essentially the same
GDoF/s rate, but by the highest degrees the structured path is substantially faster. The $p = 1$ tie is instructive.
There the two paths run genuinely different algorithms: the structured pass streams every vertex of the Cartesian grid
a fixed number of times by construction, which inflates its traffic and hence depresses its SoL even though it moves
the same number of unique DoFs per second as the unstructured pass. Their equal throughput at $p=1$ is thus a
coincidence of two different cost structures meeting, not a sign that the paths are equivalent; as soon as $p > 1$ and
interior DoFs dilute the interface work, the structured path's regularity pulls ahead.

Table~\ref{tab:combined_operator} folds the two stages into the throughput of the \emph{entire} assembled mass operator
(one volumetric apply followed by a complete DSS, i.e.\ a single matrix--vector product of the continuous operator $C^T
    A C$. The volumetric cost is fixed by the geometry (Cartesian or general/curved) and the DSS cost by the path
(structured or unstructured), so the four rows bracket the realistic operating range. Throughput rises monotonically
with $p$, from $3.0\,$GDoF/s at $p=1$, where the cell-wise redundancy is most punishing, to $27.0\,$GDoF/s at $p=7$ on
the Cartesian/structured path, as the surface-to-volume ratio of the exchanged DoFs falls and the coalesced volumetric
apply comes to dominate the increasingly cheap DSS. The bottom row lists the libCEED BP1 mass-operator
throughput~\cite{osti_1822610} on the same A100 80\,GB SXM device. Since libCEED's BP1 is posed on a general (curved)
geometry, the most directly comparable configuration is our general-geometry volume resolved through the structured DSS
path; against it our entire operator is faster per unique DoF throughout, from $1.6\times$ at $p=1$ to $4.4\times$ at
$p=5$, the margin widening with degree. This is the direct dividend of removing indirect gather--scatter from the
operator: the volumetric apply streams coalesced cell-wise data and the DSS adds only the small, bandwidth-bound
interface traffic of Table~\ref{tab:dss_structured_vs_unstructured}, whereas the assembled BP1 kernel pays the full
uncoalesced gather--scatter cost on every apply.

\begin{table}
    \centering
\begin{tabular}{llccccccc}
    \toprule
    \multicolumn{2}{l}{} & $p=1$  & $p=2$ & $p=3$ & $p=4$ & $p=5$ & $p=6$ & $p=7$         \\
    \midrule
    \multirow{3}{*}{Structured}
                         & GDoF/s & 3.9   & 13.9  & 26.8  & 39.3  & 54.0  & 68.2  & 81.1  \\
                         & GB/s   & 1 686 & 1 623 & 1 605 & 1 536 & 1 561 & 1 561 & 1 538 \\
                         & SoL    & 85 \% & 81 \% & 82 \% & 79 \% & 79 \% & 80\%  & 78\%  \\
    \midrule
    \multirow{3}{*}{Unstructured}
                         & GDoF/s & 3.9   & 12.9  & 21.4  & 29.5  & 38.2  & 48.1  & 59.2  \\
                         & GB/s   & 1 178 & 1 277 & 1 055 & 985   & 943   & 935   & 976   \\
                         & SoL    & 62 \% & 67 \% & 64 \% & 61 \% & 61 \% & 62\%  & 61\%  \\
    \bottomrule
\end{tabular}

    \caption{DSS throughput per unique DoF [GDoF/s] and DRAM speed-of-light (SoL, fraction of peak memory bandwidth) for structured
        (Cartesian, dimension-split path) and unstructured meshes on the unit cube, best block configurations,
        $p = 1, \dots, 7$. Note that DSS is memory-bound, so only memory transfers are reported.}
    \label{tab:dss_structured_vs_unstructured}
\end{table}

\begin{table}
    \centering
\begin{tabular}{llccccccc}
    \toprule
    DSS path                                      & Volume path & $p=1$ & $p=2$ & $p=3$ & $p=4$ & $p=5$ & $p=6$ & $p=7$ \\
    \midrule
    \multirow{2}{*}{Structured}
                                                  & Cartesian   & 3.0   & 9.7   & 15.5  & 19.3  & 19.5  & 21.0  & 27.0  \\
                                                  & General     & 2.7   & 8.0   & 13.9  & 17.3  & 18.4  & 17.7  & ---   \\
    \midrule
    \multirow{2}{*}{Unstructured}
                                                  & Cartesian   & 3.0   & 9.2   & 13.5  & 16.6  & 17.0  & 18.6  & 24.0  \\
                                                  & General     & 2.7   & 7.6   & 12.3  & 15.1  & 16.1  & 16.0  & ---   \\
    \midrule
    \texttt{libCEED}~\cite{osti_1822610}, Fig.~21 &             & 1.69  & 3.74  & 3.93  & 5.05  & 4.19  & 4.92  &       \\
    \bottomrule
\end{tabular}


    \caption{Throughput per unique DoF [GDoF/s] of the \emph{entire} mass operator (volumetric apply followed by DSS),
        $p = 1, \dots, 7$, on a $128^3$ mesh (A100 80\,GB SXM, fp64). The total cost is $t_\text{vol} + t_\text{DSS}$:
        the volumetric apply depends only on the geometry (Cartesian or general/curved), while the DSS exchange depends
        only on the path (structured or unstructured). Throughput is $N_\text{unique}/(t_\text{vol}+t_\text{DSS})$. The $p=7$ general-geometry entries are omitted (exceeded memory).
        The bottom row gives the libCEED BP1 throughput~\cite{osti_1822610} on the same A100 80\,GB SXM; as BP1 uses a
        general geometry, the general-volume/structured-path row is its closest counterpart.}
    \label{tab:combined_operator}
\end{table}

\subsubsection{DSS on unstructured meshes}
\label{sec:results_dss_unstructured_mesh}

The benchmarks so far ran on Cartesian or single-block geometries. We now exercise the DSS operator on meshes that are
representative of production setups, reporting the DSS cost in isolation (the volumetric apply is excluded); the meshes
are detailed in the captions and we read off the qualitative trends here.

The representative case is Figure~\ref{fig:dss_showcase}: a curved ball domain obtained by $6$ global refinements of an
unstructured coarse grid, with no hanging nodes. Such a mesh is heavily block-structured in the interior of each coarse
cell yet stitched together by genuine unstructured exchanges across the macro-block interfaces, which is the very mix a
real application presents, and one that drives both the structured sweeps and the unstructured inter-block kernels at
once. Its DSS throughput (Figure~\ref{fig:dss_showcase}b) sits within a few percent of the structured-Cartesian ceiling
of Table~\ref{tab:dss_structured_vs_unstructured} across the whole degree range, well above the all-unstructured rate
of the same table. Because the unstructured macro-interfaces form only a sparse skeleton of the mesh, they are slower
per DoF, but touch too few DoFs to move the aggregate. Two facts reinforce each other here: a curved mapping leaves the
assembly untouched (DSS moves only DoF values and references no geometric data, unlike the volumetric apply, whose cost
the per-quadrature-point Jacobians inflate), so the general-mesh penalty of the combined operator lives in the
volumetric stage; and a realistic, predominantly block-structured topology runs near the structured best case rather
than the unstructured worst case.

\begin{figure}
    \centering
    \begin{subfigure}[b]{0.49\linewidth}
        \centering
        \includegraphics[width=\linewidth]{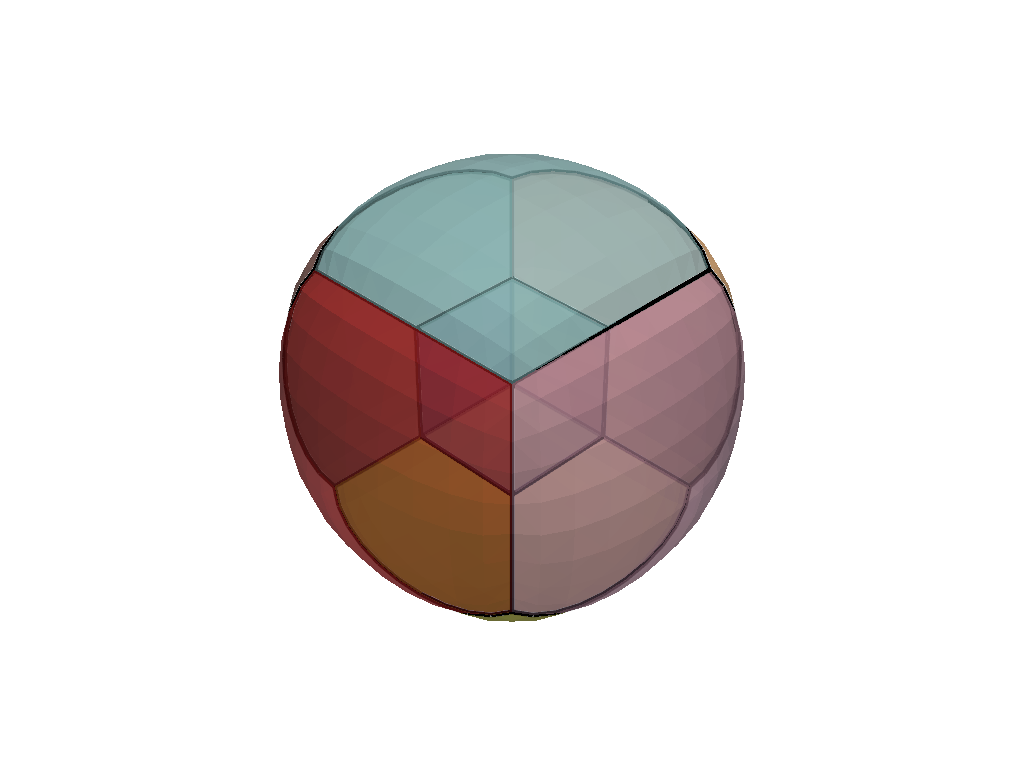}
        \caption{Coarse grid of the mesh used.}
        \label{fig:dss_showcase_mesh}
    \end{subfigure}
    \hfill
    \begin{subfigure}[b]{0.49\linewidth}
        \centering
\begin{tikzpicture}
    \begin{axis}[
            width=\linewidth,
            height=0.85\linewidth,
            ybar,
            bar width=10pt,
            xlabel={Polynomial degree $p$},
            ylabel={ GDoF (unique)/s},
            xtick={1,2,3,4,5,6,7},
            xmin=0.4, xmax=7.6,
            ymin=0, ymax=85,
            enlarge x limits=false,
            ymajorgrids=true,
            nodes near coords,
            nodes near coords style={font=\scriptsize, /pgf/number format/fixed, /pgf/number format/precision=1},
            every node near coord/.append style={anchor=south},
        ]
        \addplot[fill=teal!60, draw=teal!80!black]
        table[col sep=comma, x=p, y=gdofs_uniq] {raw_results/showcase.csv};
    \end{axis}
\end{tikzpicture}
        \caption{Throughput vs.\ polynomial degree $p$.}
        \label{fig:dss_showcase_throughput}
    \end{subfigure}
    \caption{DSS-operator throughput on a curved (ball) geometry with $6$ global refinements, $2\,097\,151$ cells and
        no hanging nodes (A100 80\,GB SXM, fp64). This mesh is representative of a production geometry: predominantly
        block-structured, but stitched together by unstructured exchanges across the macro-block interfaces, so it
        drives the structured and unstructured DSS kernels jointly. (a)~coarse grid of the mesh used; (b)~throughput per
        unique DoF [GDoF/s] as a function of polynomial degree $p$. The measured cell-wise rate (total DoFs divided by the average apply time) is
        rescaled to unique DoFs by the large-mesh factor $\big(p/(p+1)\big)^3$, which neglects the domain boundary and is
        therefore a slight, conservative underestimate of the per-unique throughput.}
    \label{fig:dss_showcase}
\end{figure}

\section{Conclusions}
\label{sec:conclusions}

In this work, the redundant, cell-wise vector is the \emph{persistent} state of a high-order continuous finite element
solver rather than a transient byproduct of matrix-free evaluation, and we have built the machinery that makes this
practical on GPUs. The theoretical core is a primal-dual equivalence theorem: provided the preconditioner returns a
continuous correction, the flexible conjugate gradient iteration executed on unassembled cell-wise data is identical,
iterate by iterate, to the assembled solve, so inter-element communication is confined to a single direct stiffness
summation inside the preconditioner and removed from the Krylov projection altogether. The algorithmic core is a DSS
operator that realizes this summation without indirect addressing, atomics, or coloring: a dimensionally-split cascade
of one-to-one face exchanges that accumulates edge and vertex sums as a byproduct of sequential axis passes, extended
to unstructured macro-interfaces by disjoint topological kernels and to $h$-adaptive meshes by a shadow-cell wrapper
that leaves the high-throughput sweeps untouched. Because cell-wise storage decouples the memory layout from the mesh
topology, the layout itself becomes a tuning parameter, which we mapped across the blocked design space.

The payoff is measured on an A100. The volumetric operators run at the memory roofline: the mass operator transfers
exactly the two doubles per DoF its formulation demands, and the Laplace operator, despite roughly three times the
arithmetic, runs \emph{faster} still, because its contractions fill the tensor cores more completely and leave it even
more firmly memory-bound. The structured DSS sustains around $80\%$ of peak bandwidth, and the entire assembled
operator (volumetric apply plus a full DSS) exceeds the libCEED BP1 benchmark on the same hardware across the whole
degree range, by up to roughly a factor of four, already at the lowest order. On a production-like curved mesh the
assembly runs within a few percent of its structured ceiling, confirming that the unstructured machinery, though slower
per DoF, is exercised on too thin a skeleton to matter. The complete stack is implemented in compact Triton kernels,
runs unmodified on NVIDIA and AMD hardware, and contributed a small-tile tensor-core instruction shape upstream.

The price is memory footprint: the cell-wise format stores up to $8\times$ as many values as the assembled vector at
$p=1$, a redundancy that falls steeply with degree and is mildest precisely in the high-order regime the method
targets. The trade is a larger but coalesced footprint in exchange for eliminating the indirect gather-scatter traffic
that bottlenecks classical pipelines, and the measurements show the exchange to be favourable at every degree tested.

Several directions follow naturally. The flexible Krylov variant was adopted precisely so that the preconditioner may
change between iterations and, in particular, be evaluated in \emph{reduced precision}~\cite{goddeke2007performance}:
because the solver is memory-bound, running the preconditioner in single or half precision directly and substantially
lowers the bytes transferred per DoF (the quantity that governs performance throughout this work) at no cost to the
accuracy of the outer iteration, which remains in double precision. The companion paper~\cite{wichrowski2026MG}
develops the geometric multigrid that exploits this, and shows in addition that DSS across hanging nodes is not
required by the solver, so the one bounded overhead identified here can be sidestepped. The framework is moreover
agnostic to the smoother: overlapping patch smoothers of Schwarz type satisfy the consistency condition of the
equivalence theorem by construction (Remark~\ref{rem:beyond_dss}), each patch solve returning a continuous correction,
so used as the preconditioner they render the outer iteration communication-free \emph{without any explicit DSS at
    all}. The same agnosticism extends to the operator (any symmetric operator fits the theorem unchanged) opening the
communication-free treatment to a far broader class of problems than the mass and Laplace operators benchmarked here.

\paragraph{Acknowledgements}
The author would like to thank Mario Lezcano Casado (OpenAI) and Keren Zhou (George Mason University) for reviewing and
accepting the author's Triton patch enabling \texttt{fp64} MMA support ($8{\times}8{\times}4$ tensor-core
instructions), on which the kernels of this work rely. Mario Lezcano Casado additionally fixed an independent Triton
bug causing shared-memory bank conflicts for 64-bit operands, which substantially improved the throughput of the
higher-degree kernels ($p = 4, 5$).


\paragraph{Declarations}
Language models (Claude, Gemini) were used to produce unassembled initial drafts and to smooth the assembled text; as
everywhere in this framework, assembly, and full accountability for all scientific content, remains with the author.

\FloatBarrier
\bibliographystyle{siam}
\bibliography{literature, added_literature}

\newpage

\appendix

\section{Extension of DSS algorithms onto Non-Conforming Interfaces}
\label{sec:dss_hanging}

The construction below extends the DSS sweeps to $h$-adaptive meshes with hanging nodes. It currently works on
structured (Cartesian) refinement; the unstructured hanging-node case is not yet robust, and the companion multigrid
paper~\cite{wichrowski2026MG} shows that the solver does not require DSS across hanging nodes at all, so this path is
not needed for the results of the present work. We record it here for completeness.

In the context of $h$-adaptive meshes with hanging nodes, interfaces may bridge elements at different hierarchical
levels. To process these non-conforming boundaries without disrupting the highly optimized structured and unstructured
kernels, we deploy a pre- and post-processing wrapper relying on \emph{Shadow Cells}. A shadow cell is a structurally
identical, conceptually dormant parent cell that is allocated in the cell-wise storage exactly like an active cell but
exists exclusively to broker data across a refinement interface (Figure~\ref{fig:dss_shadow}). Rather than treating
hanging nodes as a special case during the summation sweeps, the algorithm restricts fine-level data to the
coarse-level shadows, runs the standard structured and unstructured DSS on a topology that is globally conforming at
the coarse level, and then prolongates the assembled data back to the fine cells. The non-conforming case is thereby
reduced \emph{exactly} to the conforming one, and the summation kernels remain completely unaware of adaptivity.

\begin{figure}[htbp]
    \centering
\begin{tikzpicture}[
                scale=0.85,
                coarse/.style={draw=black, thin},
                shadow/.style={draw=black, thin, fill=gray!45},
                fine/.style={draw=black!70, line width=0.4pt},
                redge/.style={red, line width=1.6pt, shorten >=2.5pt, shorten <=2.5pt}, ]
        \def\refined{%
                0/0,1/0,2/0,3/0,4/0,5/0,6/0,7/0,%
                0/1,1/1,2/1,3/1,4/1,5/1,6/1,7/1,%
                2/2,3/2,4/2,5/2}
        \def\shadows{%
                0/1,1/1,2/1,5/1,6/1,7/1,%
                2/2,3/2,4/2,5/2}

        \draw[coarse] (0,0) rectangle (8,4);
        \foreach \i in {1,...,7} { \draw[coarse] (\i,0) -- (\i,4); }
        \foreach \j in {1,...,3} { \draw[coarse] (0,\j) -- (8,\j); }

        \foreach \c/\r in \shadows {
                \fill[shadow] (\c,\r) rectangle (\c+1,\r+1);
        }

        \foreach \c/\r in \refined {
                \draw[coarse] (\c,\r) rectangle (\c+1,\r+1);
                \draw[fine] (\c+0.5,\r) -- (\c+0.5,\r+1);   
                \draw[fine] (\c,\r+0.5) -- (\c+1,\r+0.5);   
        }

        \foreach \x in {0,1,6,7} { \draw[redge] (\x,2) -- (\x+1,2); }   
        \foreach \x in {2,3,4,5} { \draw[redge] (\x,3) -- (\x+1,3); }   
        \draw[redge] (2,2) -- (2,3);                                    
        \draw[redge] (6,2) -- (6,3);                                    

        \fill[red] (2.15,1.85) circle (2.2pt);   
        \fill[red] (5.85,1.85) circle (2.2pt);   

\end{tikzpicture}
    \caption{Shadow cells on a 2D $h$-adaptive mesh. Every coarse cell that touches the refinement interface is
        materialized in cell-wise storage as a shadow: the dormant parent of the fine children that lie across the
        interface (shaded gray). Cells in the interior of a refined patch, which border no coarse neighbour, need no
        shadow and are left unshaded. The red entries are the auxiliary degrees of freedom introduced solely to broker
        the DSS exchange across the interface: the thick red segments are the shadow faces along the interface (drawn
        broken to indicate that each coarse face carries its own copy, separated at the coarse grid nodes), and the red
        dots are the auxiliary vertex DoFs at the re-entrant step corners. Fine-level data are restricted onto these
        shadow entries, the standard conforming DSS of
        Algorithms~\ref{alg:struct_dss}--\ref{alg:unstruct_dss} runs on the resulting coarse-conforming topology, and the
        assembled values are prolongated back to the fine cells.}
    \label{fig:dss_shadow}
\end{figure}

The pre-processing step executes a \emph{Restrict-Add} and \emph{Zero-Out} pattern. First, all shadow cells are
explicitly zeroed to clear any potential stale data. Then, the dual residual degrees of freedom from the fine cells
lying on the refinement edge are restricted to the coarse shadow cell using 1D tensor-product restriction operators
$\mathcal{R}_h$ and accumulated. Immediately following this accumulation, the corresponding values on the fine children
are zeroed out. This zeroing process is critical to prevent double-counting when the subsequent standard DSS passes
inevitably process the perpendicular axes.

Once the pre-processing is complete, the standard structured and unstructured DSS operations are executed, naturally
including the populated shadow cells as valid participants in the global summation. In the post-processing phase, the
assembled data in the shadow cells is lifted back to the fine grid. We inject the coarse shadow values back into the
fine children using the prolongation operators $\mathcal{P}_h$ and accumulate them. To prevent the injection from
corrupting vertex values due to interpolation coupling, a save-restore pattern is used for shadow vertices before the
face injection. Finally, the fine cells are zeroed out locally before receiving the accumulated values, and the shadow
cells themselves are reset. This completely encapsulates the adaptivity logic outside of the high-throughput summation
sweeps.

\begin{algorithm}
    \caption{Full DSS Operator with Hanging Node Resolution}
    \label{alg:hanging_dss}
    \begin{algorithmic}[1]
        \Require Cell-wise residual vector $u$, fine refinement cells $\mathcal{T}_{\text{fine}}$, coarse shadow cells $\mathcal{T}_{\text{shadow}}$.
        \Ensure Globally assembled continuous primal residual field $u$.

        \State \textbf{Pre-processing (Restriction):}
        \State $u|_{\mathcal{T}_{\text{shadow}}} \gets 0$ \Comment{Clear dormant shadows}
        \For{\textbf{each} shadow cell $K_S \in \mathcal{T}_{\text{shadow}}$ and its fine children $K_F \in \text{children}(K_S)$}
        \State \text{Restrict-Add:} $u|_{K_S} \gets u|_{K_S} + \mathcal{R}_h(u|_{K_F})$
        \State \text{Zero-Out:} $u|_{K_F} \gets 0$ \Comment{Prevent double counting}
        \EndFor

        \State \textbf{Core Assembly:}
        \State $u \gets \textsc{Alg~1~Structured~Summation}(u)$
        \State $u \gets \textsc{Alg~2~Unstructured~Summation}(u)$

        \State \textbf{Post-processing (Prolongation):}
        \State $v_{\text{tmp}} \gets u|_{\mathcal{T}_{\text{shadow}}, \text{vertices}}$ \Comment{Save vertex values to prevent injection corruption}
        \For{\textbf{each} shadow cell $K_S \in \mathcal{T}_{\text{shadow}}$ and its fine children $K_F \in \text{children}(K_S)$}
        \State $u|_{K_F} \gets 0$ \Comment{Clear existing values}
        \State \text{Prolong-Add:} $u|_{K_F} \gets u|_{K_F} + \mathcal{P}_h(u|_{K_S})$
        \EndFor
        \State $u|_{\mathcal{T}_{\text{fine}}, \text{vertices}} \gets v_{\text{tmp}}$ \Comment{Restore uncontaminated fine vertex values}
        \State $u|_{\mathcal{T}_{\text{shadow}}} \gets 0$ \Comment{Reset shadow cells}

        \State \Return $u$
    \end{algorithmic}
\end{algorithm}

On a structured $h$-adaptively refined ball ($5$ global plus $4$ adaptive refinements, $1.2\times10^{5}$ hanging
interfaces) the wrapper's throughput is reported in Table~\ref{tab:dss_adaptive}. The wrapper's restriction and
prolongation add work that grows as $(p{+}1)^2$, so the rate trails the conforming case---by a few tens of percent at
low order, up to roughly a factor of two-and-a-half by $p=5$---yet it stays bounded and keeps rising with $p$, since
the non-conforming problem is reduced \emph{exactly} to the conforming one and adaptivity never touches the
high-throughput sweeps.

\begin{table}
    \centering
\begin{tabular}{lccccc}
    \toprule
    $p$                 & 1   & 2   & 3    & 4    & 5    \\
    \midrule
    Throughput [GDoF/s] & 2.8 & 9.6 & 14.2 & 17.4 & 18.7 \\
    \bottomrule
\end{tabular}

    \caption{DSS operator throughput on a locally refined (i.e. with hanging-nodes) mesh as a function of polynomial degree
        $p$. The mesh is built from $5$ global refinements plus $4$ adaptive refinements in the shell $0.2 < r < 0.5$
        ($9\,078\,473$ cells, $120\,216$ hanging interfaces; A100 80\,GB SXM, fp64). Throughput is reported per unique
        DoF: the measured cell-wise rate (total DoFs divided by the average apply time) is rescaled by the large-mesh
        factor $\big(p/(p+1)\big)^3$, which neglects the domain boundary and is therefore a slight, conservative
        underestimate.}
    \label{tab:dss_adaptive}
\end{table}

\end{document}